\documentclass[11pt]{amsart}
\usepackage{amssymb}
\usepackage{amsmath,url}

\title{Semiregular automorphisms of edge-transitive graphs}

\author[Michael Giudici]{Michael Giudici}
\address{Michael Giudici, Centre for Mathematics of Symmetry and Computation, \newline
\indent School of Mathematics and Statistics, University of Western Australia, \newline
\indent 35 Stirling Highway, Crawley, WA 6009, Australia.}
\email[Michael Giudici]{giudici@maths.uwa.edu.au}

\author[Primo\v{z} Poto\v{c}nik]{Primo\v{z} Poto\v{c}nik}
\address{Primo\v{z} Poto\v{c}nik, Faculty of Mathematics and Physics, University of Ljubljana, \newline
\indent Jadranska 21, SI-1000 Ljubljana, Slovenia.\newline
\indent Also affiliated with: IAM, University of Primorska, \newline
\indent Glagolja\v{s}ka 8, SI-6000 Koper, Slovenia.}
\email {primoz.potocnik@fmf.uni-lj.si}
             
\author[Gabriel Verret]{Gabriel Verret}
\address{Gabriel Verret, Centre for Mathematics of Symmetry and Computation, \newline
\indent School of Mathematics and Statistics, University of Western Australia, \newline
\indent 35 Stirling Highway, Crawley, WA 6009, Australia. \newline
\indent Also affiliated with: FAMNIT, University of Primorska, \newline
\indent Glagolja\v{s}ka 8, SI-6000 Koper, Slovenia.}
\email{gabriel.verret@uwa.edu.au}

\newtheorem{theorem}{Theorem}[section]

\newtheorem{lemma}[theorem]{Lemma}
\newtheorem{corollary}[theorem]{Corollary}

\newtheorem{question}[theorem]{Question}

\newcommand{\K}{{\rm{K}}}
\newcommand{\W}{{\rm{W}}}
\newcommand{\V}{{\rm{V}}}
\newcommand{\E}{{\rm{E}}}
\newcommand{\ZZ}{\mathbb{Z}}
\newcommand{\DD}{\mathbb{D}}

\newcommand{\Aut}{{\rm{Aut}}}
\newcommand{\Sym}{{\rm Sym}}

\textwidth=17cm
\textheight=23cm
\parindent=16pt
\oddsidemargin=-0.3cm
\evensidemargin=-0.3cm
\topmargin=-0.5cm

\thanks{This research began during a visit of the first author to Ljubljana. He thanks the second and third authors for their hospitality and support of the visit. The research of the first author is supported by an ARC Discovery project. The third author is supported by UWA as part of the Australian Research Council grant DE130101001. }

\subjclass[2010]{Primary 20B25; Secondary 05E18}
\keywords{edge-transitive graphs, semiregular automorphism.}

\begin{document}

\begin{abstract}
The polycirculant conjecture asserts that every vertex-transitive digraph has a semiregular automorphism, that is, a nontrivial automorphism whose cycles all have the same length. In this paper we investigate the existence of semiregular automorphisms of edge-transitive graphs. In particular, we show that any regular edge-transitive graph of valency three or four has a semiregular automorphism. 
\end{abstract}

\maketitle

\section{Introduction}

All graphs considered in this paper are finite and simple. A graph $\Gamma$ is said to be $G$-\emph{vertex-transitive} if $G$ is a subgroup of $\Aut(\Gamma)$ acting transitively on $\V(\Gamma)$. Similarly, $\Gamma$ is said to be $G$-\emph{arc-transitive} or $G$-\emph{edge-transitive} if $G$ acts transitively on the arcs or edges of $\Gamma$, respectively. (An \emph{arc} is an ordered pair of adjacent vertices.) Furthermore, a graph is said to be {\em locally $G$-arc-transitive} provided that, for every vertex $v$ of $\Gamma$, the permutation group $G_v^{\Gamma(v)}$ induced by the action of the vertex-stabiliser $G_v$  on the neighbourhood $\Gamma(v)$ is transitive. When $G=\Aut(\Gamma)$, the prefix $G$ in the above notation is sometimes omitted.

For a group $G$ that acts on a set $\Omega$ we say that it is \emph{semiregular} on $\Omega$ if the stabiliser $G_\omega$ is trivial for every $\omega\in \Omega$; note that this implies that the action is faithful. A non-identity element $g\in G$ is called {\em semiregular} (on $\Omega$), provided that the group $\langle g \rangle$ generated by $g$ is semiregular on $\Omega$.  Equivalently, a non-identity element $g\in G$ is semiregular provided that $\langle g \rangle$ acts faithfully on $\Omega$ and that all of the cycles of the permutation induced by $g$ have the same length.

Existence of a graph automorphism that acts semiregularly on the vertex-set of the graph (we shall refer to such an automorphism simply as a {\em semiregular automorphism}) is often a desirable feature of a graph, from both a theoretical and a practical point of view. For example they give nice representations of the graphs \cite{biggs}, are used in the enumeration of graphs of small order \cite{MR} and in the construction of Hamiltonian paths and cycles \cite{alspach}.

In 1981, Maru\v{s}i\v{c} conjectured~\cite{Dragan} that every vertex-transitive digraph with at least $2$ vertices has a semiregular automorphism. Even though the conjecture has been settled for some special classes of vertex-transitive graphs (for example, for vertex-transitive graphs of prime-power order \cite{Dragan} and order twice the square of a prime \cite{DraganScap}, for graphs of valency at most 4 \cite{Dobson,DraganScap}, for distance transitive graphs~\cite{KS}, for locally quasiprimitive arc-transitive graphs~\cite{GX}, and for some other special families of arc-transitive graphs~\cite{GV,Verret,X}), the complete solution still seems to be beyond our reach.

Recently, the interest in semiregular automorphisms has spread to other areas of combinatorics. For example, Pisanski raised the question~\cite{conj} whether every flag-transitive configuration of type $(v_r)$ is polycyclic (see \cite{BP} for the definition of and basic facts about polycyclic configurations).

This motivates the following graph-theoretical question, whose affirmative answer would imply an affirmative answer to the question of Pisanski (see \cite{MarPis} for an overview of the connection between symmetry properties of combinatorial configurations and graphs).

\begin{question}\label{q:GPV}
Does every connected, regular, locally arc-transitive graph admit a semiregular automorphism? Moreover, if the graph is bipartite, can the semiregular automorphism be chosen so as to fix each part of the bipartition?
\end{question}

As we prove in this paper, the answer to this question is positive for regular graphs of valency at most $4$. In fact, we prove slightly more general statements, in the setting of edge-transitive graphs. Note that regular is required as the complete bipartite graph $K_{3,4}$ does not admit a semiregular automorphism.

\begin{theorem}\label{theorem:main3}
Let $\Gamma$ be a connected $G$-edge-transitive $3$-valent graph. Then $G$ contains a semiregular element.
\end{theorem}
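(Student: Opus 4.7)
My plan is to reduce to the locally arc-transitive setting, bound the vertex stabiliser using classical results, and then argue according to prime divisors of $|G|$.

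Since $\Gamma$ is connected, cubic, and $G$-edge-transitive, $G$ has either one or two orbits on $\V(\Gamma)$. If there is one orbit, $G$ is vertex-transitive, and because the valency $3$ is odd, $G$ must in fact be arc-transitive (half-arc-transitive graphs have even valency). If there are two orbits, each orbit is an independent set, so $\Gamma$ is bipartite with these orbits as the parts; then edge-transitivity of $G$ forces $G_v^{\Gamma(v)}$ to be transitive for every vertex $v$, so $\Gamma$ is locally $G$-arc-transitive. In either case, I would invoke the classical bounds on vertex stabilisers in cubic locally-arc-transitive graphs: Tutte's theorem (arc-transitive case) gives $|G_v| \mid 48$, while Goldschmidt's classification of the fifteen cubic locally-arc-transitive amalgams (semisymmetric case) likewise bounds $|G_v|$. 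In both cases $|G_v|$ is divisible only by the primes $2$ and $3$.

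The key observation is then: if $g \in G$ has prime order $p$ with $p \nmid |G_v|$, then $\langle g \rangle \cap G_v = 1$, so $\langle g \rangle$ acts semiregularly on $\V(\Gamma)$. Consequently, if $|G|$ has any prime divisor $p \geq 5$, Cauchy's theorem delivers an element of order $p$ in $G$, which is automatically semiregular, and we are done.

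It remains to treat the case where $|G|$ is a $\{2,3\}$-group, which by Burnside's $p^aq^b$-theorem is solvable, and this is where I expect the main obstacle. Both $2$-elements and $3$-elements of $G$ can lie in vertex stabilisers, so one has to dig into the group-theoretic structure. My approach would be to work through the short list of possible amalgams $(G_u, G_v, G_{uv})$ (Tutte's five plus Goldschmidt's fifteen) and, for each, examine a minimal normal subgroup $N$ of $G$; such an $N$ is elementary abelian of $p$-power order for $p \in \{2,3\}$, and its orbits on $\V(\Gamma)$ (within each $G$-orbit) form a $G$-invariant system of equal-sized blocks of imprimitivity. One then tries to produce a semiregular element either inside $N$ itself or in a characteristic subgroup such as the Fitting subgroup $F(G)$, using the explicit description of $G_v$ coming from the amalgam. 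The main technical challenge I foresee is verifying, amalgam by amalgam, that such a characteristic subgroup genuinely contains a semiregular element, rather than merely acting transitively within each orbit with nontrivial point-stabiliser.
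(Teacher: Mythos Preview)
Your reduction to the locally $G$-arc-transitive case and to the situation where $G$ is a solvable $\{2,3\}$-group is correct and matches the paper. Note, however, that invoking Tutte and Goldschmidt is unnecessary: since $G_v^{\Gamma(v)}\le\Sym(3)$ is a $\{2,3\}$-group for every vertex $v$, the elementary fact that $G_v$ embeds (via the action on successive neighbourhoods) into a group whose prime divisors are those of the local groups already gives that $G_v$ is a $\{2,3\}$-group. The paper records this as a short lemma and never touches the amalgam classifications.

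The genuine gap is in your endgame. You propose to run through twenty amalgams and, for each, locate a semiregular element in some characteristic subgroup; you yourself flag that you have not verified this works, and indeed it is both laborious and unnecessary. The paper's key observation, which you are missing, is that the degree of the local action is \emph{prime}. Take a minimal normal (elementary abelian $p$-) subgroup $N$ of $G$. If $N$ is semiregular you are done; otherwise $N_v^{\Gamma(v)}$ is a nontrivial normal subgroup of the transitive group $G_v^{\Gamma(v)}$ of prime degree $3$, hence is itself transitive, forcing $p=3$. A short connectivity argument then shows $N$ is transitive on the set $A$ of vertices at even distance from a fixed vertex, so $|A|$ is a power of $3$. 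Now let $z$ be a central element of order $3$ in a Sylow $3$-subgroup $S$ of $G$ (which is transitive on both parts of the bipartition, or on $\V(\Gamma)$ if $\Gamma$ is not bipartite). Either $z$ is semiregular, or $z$ fixes one part pointwise; in the latter case three vertices share a common neighbourhood, which for a cubic graph forces $\Gamma\cong K_{3,3}$, and this single graph is dispatched by hand. No amalgam-by-amalgam analysis is required.
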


Formulation of the analogous result for $4$-valent graphs requires some definitions. The \emph{wreath graph} $\W(n,2)$ is the lexicographic product of a cycle of length $n\geq 3$ and an edgeless graph on $2$ vertices. In other words, $\V(\W(n,2))=\ZZ_n\times\ZZ_2$ with $(i,u)$ being adjacent to $(j,v)$ if and only if $i-j\in\{-1,1\}$. The {\em subdivided double} $\DD\Lambda$ of a graph $\Lambda$ is the bipartite graph with parts $\E(\Lambda)$ and $\V(\Lambda)\times\ZZ_2$. An edge $e\in \E(\Lambda)$ is adjacent in $\DD\Lambda$ to a pair $(v,i)\in \V(\Lambda)\times\ZZ_2$ whenever $v$ is an endvertex of $e$ in $\Lambda$. 

\begin{theorem}\label{theorem:main}
Let $\Gamma$ be a connected $G$-edge-transitive $4$-valent graph. Then at least one of the following holds:
\begin{enumerate}
\item $G$ contains a semiregular element, 
\item $\Gamma\cong \W(2^n,2)$ for some $n\geq 2$, \label{ex1}
\item $\Gamma\cong \DD\Lambda$ for some arc-transitive, $4$-valent graph $\Lambda$ of order a power of $2$. \label{ex2}
\end{enumerate}
\end{theorem}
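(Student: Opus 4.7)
My plan is to proceed by induction on $|\V(\Gamma)|$, combining a Sylow-theoretic reduction to $\{2,3\}$-groups with an analysis of minimal normal subgroups and quotient graphs; the exceptions (2) and (3) will arise as obstructions to lifting semiregular elements through a minimal normal subgroup of order~$2$.

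The first step is a reduction via Sylow theory. If some prime $p$ divides $|G|$ but not $|G_v|$ for a vertex $v$, then any Sylow $p$-subgroup of $G$ has trivial intersection with every vertex stabiliser and therefore acts semiregularly on $\V(\Gamma)$, furnishing the required element. Hence I may assume that $|G|$ and $|G_v|$ share the same prime divisors. Since $G_v^{\Gamma(v)}$ is a transitive subgroup of $\Sym(4)$, its prime divisors lie in $\{2,3\}$. Combining this with known bounds on the pointwise neighbourhood stabiliser $G_v^{[1]}$ for $4$-valent edge-transitive graphs (via the existing classification of amalgams for locally arc-transitive $4$-valent graphs), I would restrict the prime divisors of $|G|$ to $\{2,3\}$ as well, so $G$ is solvable by Burnside's $p^aq^b$-theorem.

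Next, I would take a minimal normal subgroup $N\norml G$; by solvability it is elementary abelian of exponent $p$ for some $p\in\{2,3\}$. If $N$ acts semiregularly on $\V(\Gamma)$, any non-identity element of $N$ is the required semiregular element of $G$. Otherwise $N_v\ne 1$ for some vertex $v$; since $N$ is normal and abelian, the $N$-orbits on each $G$-orbit of vertices have a common size. I then form the quotient graph $\Gamma/N$, whose vertex set consists of the $N$-orbits and on which $G/N$ acts edge-transitively, although the valency may drop. By induction on $|\V(\Gamma)|$, either $G/N$ contains a semiregular element $\bar g$, or $(\Gamma/N,G/N)$ falls into one of the exceptional configurations. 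In the first case, I would try to lift $\bar g$ to a semiregular element of $G$; such a lift succeeds except when specific orbit-length cancellations occur inside $N$.

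The main obstacle, and where the two exceptions emerge, is the lifting step. A careful analysis should show that the lift can fail only when $p=2$ and $|N|=2$, with $N$ realised as a fibre-swap over the quotient. When $\Gamma/N$ is then a cycle of length $2^n$, reconstruction of $\Gamma$ from this data forces $\Gamma\cong\W(2^n,2)$; when $\Gamma/N$ is controlled by a $4$-valent arc-transitive graph $\Lambda$ of $2$-power order with $N$ implementing the doubling/subdivision construction, one obtains $\Gamma\cong\DD\Lambda$. Verifying that these are the only obstruction cases requires a meticulous analysis of the possible local amalgams when $G_v^{\Gamma(v)}$ is $\ZZ_4$ or the Klein four-group (the transitive subgroups of $\Sym(4)$ that are $2$-groups), together with a parallel treatment of the case where $G$ is not vertex-transitive and $\Gamma$ is bipartite with two $G$-orbits, which is precisely the regime in which the $\DD\Lambda$ family naturally arises.
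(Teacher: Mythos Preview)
Your proposal has a genuine structural gap in the induction step. When $N$ is not semiregular, the quotient $\Gamma/N$ is typically \emph{not} $4$-valent, so the induction hypothesis for $4$-valent $G$-edge-transitive graphs does not apply to $(\Gamma/N,\,G/N)$. In fact, in the critical case the paper encounters, $\Gamma/N$ is $(2,4)$-biregular (the subdivision of a $4$-valent graph), and the acting group is $G/K$ for the kernel $K$ on $N$-orbits rather than $G/N$. You acknowledge that the valency may drop, but you then invoke induction anyway without saying what statement you are inducting on; there is no obvious common inductive framework that covers $4$-valent graphs and $(2,4)$-biregular graphs simultaneously and yields the same trichotomy.

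Your account of where the exceptions (2) and (3) come from is also off. They do not arise as obstructions to lifting through a minimal normal subgroup of order $2$. In the paper's argument the exceptional families appear \emph{before} any quotienting: one first observes that if the vertex-classes have $2$-power size, then a central element of a Sylow $2$-subgroup either gives a semiregular element or forces $\Gamma$ to be unworthy, and the classification of unworthy $4$-valent edge-transitive graphs (Lemma~\ref{lem:unworthy}) immediately produces $\W(2^n,2)$ and $\DD\Lambda$. Once $3$ divides $|A|$, exceptions never reappear. The endgame is not a generic lift of an arbitrary semiregular $\bar g$, but the specific fact that $\Gamma/N$ is the subdivision of a $4$-valent $G/K$-arc-transitive graph $\Lambda$ with $3\mid |\V(\Lambda)|$; then Lemma~\ref{lemma:main} supplies a semiregular element of order $3$ in $G/K$, and since $K$ is a $2$-group the coprime lifting Lemma~\ref{lemma:coprime} applies cleanly. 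Your ``lift $\bar g$ except for cancellations inside $N$'' is precisely the step that cannot be made to work without this coprimality, and your proposed salvage via $|N|=2$ does not match what actually happens. (Two smaller points: $G_v^{\Gamma(v)}$ need not be transitive in the half-arc-transitive case, and no amalgam classification is needed to bound the primes in $|G_v|$ --- Lemma~\ref{lem:leash} does it directly.)
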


Using Theorems~\ref{theorem:main3} and~\ref{theorem:main}, it is not difficult to prove the following result, which answers Question~\ref{q:GPV} for graphs of valency at most $4$.

\begin{corollary}\label{cor:poly}
Let $\Gamma$ be a connected edge-transitive regular graph of valency at most $4$. Then $\Gamma$ admits a semiregular automorphism. Moreover, if $\Gamma$ is bipartite and locally arc-transitive, the semiregular element can be chosen in such a way as to preserve each part of the bipartition.
\end{corollary}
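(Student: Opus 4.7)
The plan is to split by valency. For valency $1$ or $2$ the graph is $\K_2$ or a cycle $\C_n$, and the unique nontrivial automorphism or a rotation (by $2$, if one needs to preserve the bipartition and $n$ is even) is easily seen to be semiregular. For valency $3$, Theorem~\ref{theorem:main3} applied with $G=\Aut(\Gamma)$ yields the first assertion directly; for the moreover-clause one applies the theorem instead to the index-at-most-$2$ subgroup of $\Aut(\Gamma)$ preserving each part of the bipartition, which is itself edge-transitive because local arc-transitivity together with connectedness implies edge-transitivity.

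For valency $4$ the plan is to invoke Theorem~\ref{theorem:main}. Case (1) is immediate. In case (2), where $\Gamma\cong\W(2^n,2)$, the rotation $(i,j)\mapsto(i+1,j)$ (or the rotation by $2$ if we need part-preservation) is semiregular by inspection. The interesting case is (3), where $\Gamma\cong\DD\Lambda$ for some arc-transitive $4$-valent graph $\Lambda$ of order $2^n$. My plan here is to produce an involution $\alpha\in\Aut(\Lambda)$ that acts semiregularly on \emph{both} $\V(\Lambda)$ and $\E(\Lambda)$, and then to lift it to $\DD\Lambda$ via $\widetilde\alpha(e)=\alpha(e)$ and $\widetilde\alpha(v,i)=(\alpha(v),i)$. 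The lift $\widetilde\alpha$ is an involution whose cycles on $\V(\DD\Lambda)$ all have length $2$ and which preserves each part of the natural bipartition of $\DD\Lambda$, so it delivers both the main statement and the moreover-clause simultaneously.

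To produce $\alpha$, I would use a standard Sylow argument. Since $\Aut(\Lambda)$ is transitive on both $\V(\Lambda)$ and $\E(\Lambda)$, and each of these sets has $2$-power order, a Sylow $2$-subgroup $P\le\Aut(\Lambda)$ is itself transitive on each of them. Now pick any involution $\alpha\in Z(P)$, which exists since $Z(P)$ is a nontrivial $2$-group. As $\alpha$ is central in $P$, its fixed-point set on $\V(\Lambda)$ is $P$-invariant, hence, by transitivity, empty or all of $\V(\Lambda)$; faithfulness of the action rules out the latter, so $\alpha$ is semiregular on $\V(\Lambda)$. The same dichotomy applied to $\E(\Lambda)$ shows that $\alpha$ either fixes every edge setwise or none. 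It cannot fix every edge, for otherwise the relation $\alpha(\{v,u\})=\{v,u\}$ combined with $\alpha(v)\ne v$ would force $\alpha(v)=u$ for every neighbour $u$ of $v$, absurd in a $4$-valent graph. So $\alpha$ is also semiregular on $\E(\Lambda)$, and the lift $\widetilde\alpha$ has the required properties. The main obstacle is precisely this final step - preventing $\alpha$ from inverting some edge of $\Lambda$, which would destroy the semiregularity of $\widetilde\alpha$ on $\E(\Lambda)$ - and it is here that the centrality of $\alpha$ in a transitive $P$ does the decisive work.
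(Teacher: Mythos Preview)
Your argument is correct and follows essentially the same line as the paper: apply Theorems~\ref{theorem:main3} and~\ref{theorem:main} (to the part-preserving subgroup for the moreover-clause, via Lemma~\ref{lem:final}), and for the exceptional outcomes $\W(2^n,2)$ and $\DD\Lambda$ exhibit explicit part-preserving semiregular automorphisms---your Sylow argument in the $\DD\Lambda$ case is exactly the content of the paper's Corollary~\ref{cor:v+e}. The only cosmetic differences are your rotation of $\W(2^n,2)$ in place of the paper's swap $(i,0)\leftrightarrow(i,1)$, and your explicit treatment of valency $\le 2$; do make explicit, as you did for valency~$3$, that in the valency-$4$ moreover-clause Theorem~\ref{theorem:main} is being applied to the part-preserving subgroup so that case~(1) already yields a part-preserving element.
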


Note that in the $3$-valent case, we can guarantee that any edge-transitive group contains a semiregular element, while in the $4$-valent case, we only claim this for the full automorphism group. In fact, the exceptional cases~(\ref{ex1}) and~(\ref{ex2}) in Theorem~\ref{theorem:main} cannot be avoided, as will be shown in  Section~\ref{sec:counterexample}.

\section{Preliminaries}
\label{sec:prelim}

In this section we prove a few auxiliary results. We start with the following lemma, the proof of which is easy and left to the reader.

\begin{lemma}\label{lem:final}
A connected locally $G$-arc-transitive graph is $G$-edge-transitive.
\end{lemma}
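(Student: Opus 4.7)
The plan is to prove edge-transitivity directly by showing any two edges lie in the same $G$-orbit, using connectedness to link them and local arc-transitivity at each intermediate vertex to perform elementary moves. Let $\{u,v\}$ be a fixed edge; it suffices to show that for every edge $\{x,y\}$ there exists $g \in G$ mapping $\{u,v\}$ to $\{x,y\}$.

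First I would connect $u$ to $x$ by a path $u=w_0, w_1, \ldots, w_k=x$ in $\Gamma$, which exists by connectedness. The key observation is that local $G$-arc-transitivity at a vertex $w$ says exactly that $G_w$ is transitive on $\Gamma(w)$, so any two edges incident to a common vertex lie in the same $G$-orbit. Applied at $u=w_0$, this gives $g_0 \in G_{w_0}$ with $g_0(v)=w_1$, so $\{u,v\}$ and $\{w_0,w_1\}$ lie in the same orbit. Next, at $w_1$ we find $g_1 \in G_{w_1}$ sending $w_0$ to $w_2$, which moves $\{w_0,w_1\}$ to $\{w_1,w_2\}$. Iterating along the path, one sees that $\{u,v\}$ and $\{w_{k-1},w_k\}=\{w_{k-1},x\}$ are in the same $G$-orbit.

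Finally, applying local arc-transitivity at $x$ produces an element of $G_x$ mapping $w_{k-1}$ to $y$, so $\{w_{k-1},x\}$ and $\{x,y\}$ lie in the same orbit. Composing the moves yields an element of $G$ sending $\{u,v\}$ to $\{x,y\}$, proving $G$-edge-transitivity.

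There is no real obstacle here; the only point requiring a moment's attention is that each step of the path walk genuinely corresponds to an edge-move, which works because an element of $G_{w_i}$ fixes $w_i$ and thus sends an incident edge to another incident edge. This is exactly the content of "locally $G$-arc-transitive" at each vertex, which is why the argument is a straightforward induction along a path rather than anything more delicate.
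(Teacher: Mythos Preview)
Your argument is correct: the path-walk using local arc-transitivity at each intermediate vertex is the standard way to see this, and every step is sound (the only edge case, $k=0$ with $u=x$, is covered by your final application of local arc-transitivity at $x$). The paper itself declares the proof ``easy and left to the reader,'' so there is nothing further to compare against; your write-up is exactly the kind of routine verification the authors had in mind.
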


Next, we have the following well-known classification of $G$-edge-transitive graphs:

\begin{lemma}
\label{lem:Get}
Let $\Gamma$ be a connected $G$-edge-transitive graph. Then one of the following occurs:
\begin{enumerate}
\item $\Gamma$ is $G$-arc-transitive,
\item $\Gamma$ is $G$-vertex-transitive and $G_v^{\Gamma(v)}$ has two orbits of equal size for every vertex $v$,
\item $\Gamma$ is bipartite and the two sets of the bipartition are exactly the orbits of $G$ on $\V(\Gamma)$.
\end{enumerate}
\end{lemma}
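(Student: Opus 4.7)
The plan is to split into cases based on the number of $G$-orbits on $\V(\Gamma)$. First I would show this number is at most two: fix a representative edge $\{u,w\}$; by edge-transitivity every edge of $\Gamma$ has both endpoints in $u^G \cup w^G$, and connectedness of $\Gamma$ propagates this along paths to every vertex.

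If $G$ has two vertex-orbits $A$ and $B$, I would argue that $u$ and $w$ must lie in different orbits. Otherwise $\{u,w\}$ lies inside a single orbit, and edge-transitivity then forces every edge to lie in that same orbit; the other orbit would be isolated in $\Gamma$, contradicting connectedness (assuming $|\V(\Gamma)|>1$). Hence every edge joins $A$ to $B$, so $\Gamma$ is bipartite with bipartition $\{A,B\}$, yielding case~(3).

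If $G$ is vertex-transitive, I would pass to the $G$-action on the arc set. Each edge supports exactly two arcs, so edge-transitivity forces the number of $G$-orbits on arcs to be one or two: one orbit gives $G$-arc-transitivity and case~(1). Otherwise, let $O_1, O_2$ be the two arc-orbits; the key point is that they must be \emph{paired}, that is, each edge contributes one arc to each orbit, because if some edge had both arcs in the same orbit then edge-transitivity would force every edge to do so, and $G$ would be arc-transitive after all. Hence $|O_1|=|O_2|=|\E(\Gamma)|$, and by vertex-transitivity these $G$-orbits on arcs correspond bijectively to the orbits of $G_v$ on $\Gamma(v)$, so $G_v^{\Gamma(v)}$ has exactly two orbits of equal size, giving case~(2). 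The only step that requires any genuine thought is this pairing argument; everything else is a routine orbit count.
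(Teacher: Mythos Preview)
The paper does not prove this lemma; it is introduced as ``the following well-known classification of $G$-edge-transitive graphs'' and stated without argument. So there is nothing to compare against, and the question is simply whether your proof is sound. It is.

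A couple of small remarks. In the two-orbit case, your ``otherwise'' argument is redundant: once you have shown $\V(\Gamma)=u^G\cup w^G$, the existence of two distinct orbits already forces $u^G\neq w^G$, so $u$ and $w$ lie in different orbits automatically. In the vertex-transitive, two-arc-orbit case, the step you flag as ``the only one that requires genuine thought'' is indeed the crux, and your pairing argument is correct; the translation from global arc-orbits to local $G_v$-orbits on $\Gamma(v)$ is the standard bijection (arcs based at $v$ meet every $G$-orbit on arcs, and two such arcs are $G$-equivalent iff they are $G_v$-equivalent), and equal sizes follow since each vertex contributes $|O_i|/|\V(\Gamma)|$ arcs to $O_i$.
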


Note that, if (1) or (3) of Lemma~\ref{lem:Get} occurs, then $\Gamma$ is locally $G$-arc-transitive while if (2) occurs, then $\Gamma$ is $G$-half-arc-transitive. Lemma~\ref{lem:Get} also has the following two immediate corollaries.

\begin{corollary}
\label{cor:Get2}
Let $\Gamma$ be a connected $G$-edge-transitive graph. Then, if there is a path of even length between two vertices, they are in the same $G$-orbit. 
\end{corollary}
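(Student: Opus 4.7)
The plan is to invoke Lemma~\ref{lem:Get} directly and handle each of its three cases. In cases (1) and (2), $\Gamma$ is $G$-vertex-transitive, so there is only one $G$-orbit on $\V(\Gamma)$ and any two vertices are trivially in the same orbit, irrespective of the parity of any path between them. So these cases are immediate.

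The substantive case is (3): $\Gamma$ is bipartite with the two parts of the bipartition being precisely the $G$-orbits on $\V(\Gamma)$. Here I would use the standard fact that in any bipartite graph, the distance-$2$ relation (and more generally, the ``joined by a walk of even length'' relation) preserves the parts of the bipartition, since consecutive vertices along a walk alternate between the two parts. Thus if $u$ and $v$ are joined by a path of even length, they lie in the same bipartition class, which in this case coincides with a single $G$-orbit, so $u$ and $v$ are in the same $G$-orbit.

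There is no real obstacle here; the corollary is essentially a one-line observation once Lemma~\ref{lem:Get} is in hand. The only thing worth noting is that ``path'' should presumably be read as ``walk'' for the argument about even length to apply most cleanly, but since every walk of even length between $u$ and $v$ contains a path of length of the same parity (mod $2$) between them in a bipartite graph, the statement for paths and the statement for walks are equivalent in the relevant cases.
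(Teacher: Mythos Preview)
Your argument is correct and matches the paper's approach: the paper states this as an immediate corollary of Lemma~\ref{lem:Get} without further proof, and your case analysis is exactly the intended reasoning. The remark about paths versus walks is unnecessary, since in a bipartite graph the endpoints of any path of even length already lie in the same part.
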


\begin{corollary}
\label{cor:Get3}
Let $\Gamma$ be a connected $G$-edge-transitive graph. Then $\Gamma$ is either locally $G$-arc-transitive, or it is $G$-vertex-transitive and $G_v^{\Gamma(v)}$ has two orbits of equal size (and, in particular, the valency of $\Gamma$ is even).
\end{corollary}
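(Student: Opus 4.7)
The strategy is to derive Corollary~\ref{cor:Get3} as an essentially mechanical repackaging of Lemma~\ref{lem:Get}: I would walk through the three cases of that lemma and show that each one either coincides with the second alternative of the corollary or forces local $G$-arc-transitivity. The assertion that the valency is even will then drop out from case~(2) of the lemma.

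Case~(1) of Lemma~\ref{lem:Get} is immediate: if $\Gamma$ is $G$-arc-transitive then, given two arcs $(v,u_1)$ and $(v,u_2)$ with common tail $v$, any $g\in G$ sending the first to the second automatically fixes $v$, so $g\in G_v$ and $G_v^{\Gamma(v)}$ is transitive. Case~(2) is literally the second alternative of the corollary, and the evenness of the valency then follows by noting that $|\Gamma(v)|$ is the sum of the sizes of the two orbits of $G_v^{\Gamma(v)}$, which are equal by hypothesis.

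The only case requiring a sentence of argument is case~(3), in which $\Gamma$ is bipartite with parts $A$ and $B$ that coincide with the two $G$-orbits on $\V(\Gamma)$; in particular, every $g\in G$ preserves the bipartition setwise. Given $v\in A$ and two neighbours $u_1,u_2\in\Gamma(v)\subseteq B$, $G$-edge-transitivity yields $g\in G$ with $g\{v,u_1\}=\{v,u_2\}$; since $g$ carries $A$ to $A$ and $B$ to $B$, it must fix $v$ and send $u_1$ to $u_2$, so $G_v$ is transitive on $\Gamma(v)$. The symmetric argument works for $v\in B$, giving local $G$-arc-transitivity. I do not expect any genuine obstacle; the only thing to be careful about is not conflating the two parts of the bipartition when invoking edge-transitivity in this last case.
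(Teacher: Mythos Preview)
Your proposal is correct and follows exactly the approach the paper intends: the paper presents Corollary~\ref{cor:Get3} as an immediate consequence of Lemma~\ref{lem:Get}, noting just before it that cases~(1) and~(3) of that lemma yield local $G$-arc-transitivity, while case~(2) is the half-arc-transitive alternative. You have simply spelled out the one-line verifications that the paper leaves to the reader.
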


We also need the following folklore lemma:

\begin{lemma} \label{lem:leash} 
Let $p$ be a prime, let $\Gamma$ be a connected graph and let $G\le \Aut(\Gamma)$. If, for every $v\in \V(\Gamma)$, $|G_v^{\Gamma(v)}|$ is coprime to $p$, then so is $|G_u|$ for every $u\in \V(\Gamma)$. 
\end{lemma}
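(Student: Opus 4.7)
The plan is to proceed by a standard Sylow argument combined with a connectivity/propagation step. Suppose for contradiction that $p$ divides $|G_u|$ for some vertex $u$, and let $P$ be a Sylow $p$-subgroup of $G_u$, so $P$ is a nontrivial $p$-group. The first key observation is that the image of $P$ under the natural homomorphism $G_u \to G_u^{\Gamma(u)}$ is both a $p$-group (as a quotient of $P$) and a subgroup of $G_u^{\Gamma(u)}$, whose order is coprime to $p$ by hypothesis; hence this image is trivial. Consequently $P$ fixes $\Gamma(u)$ pointwise.

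The second step is to propagate this fixing property along the graph. For every neighbour $v$ of $u$, $P \le G_v$, so by the same argument applied to $v$ in place of $u$, the image of $P$ in $G_v^{\Gamma(v)}$ is trivial, and therefore $P$ fixes $\Gamma(v)$ pointwise as well. Iterating this along walks from $u$, and using that $\Gamma$ is connected, a straightforward induction on the distance from $u$ shows that $P$ fixes every vertex of $\Gamma$. Since $G \le \Aut(\Gamma)$ acts faithfully on $\V(\Gamma)$, we conclude $P = 1$, contradicting the assumption that $p \mid |G_u|$.

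The only genuine obstacle is the bookkeeping in the propagation step — one needs to be sure that the hypothesis ``$|G_v^{\Gamma(v)}|$ is coprime to $p$'' applies at \emph{every} vertex reached, which is given to us, and that the propagation indeed exhausts $\V(\Gamma)$, which is guaranteed by connectedness. Since the statement is explicitly flagged as folklore, I would keep the write-up short, essentially just recording the two steps above.
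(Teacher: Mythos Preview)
Your proof is correct and follows essentially the same approach as the paper: both argue by contradiction, observing that a nontrivial $p$-element (the paper takes a single element $g$ of order $p$, you take a Sylow $p$-subgroup) fixing a vertex must act trivially on its neighbourhood by the coprimality hypothesis, and then propagate this through the connected graph to force the element to be trivial. The only cosmetic difference is that the paper phrases the propagation via a ``minimal distance to a moved vertex'' argument rather than an explicit induction on distance.
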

\begin{proof}
We prove this by contradiction. Let $p$ be a prime dividing $|G_u|$ for some $u\in\V(\Gamma)$ and let $g\in G_u$ have order $p$. Since $g$ is non-trivial, it must move some vertex. Let $w$ be a vertex of $\Gamma$ moved by $g$ at minimal distance from $u$. By the connectivity of $\Gamma$, there is a path $u,u_1,\ldots,u_t,w$ such that $g$ fixes each $u_i$. Then $g\in G_{u_t}$ and $g$ acts nontrivially on $\Gamma(u_t)$. Thus $p$ divides $|G_{u_t}^{\Gamma(u_t)}|$ and the result follows.
\end{proof}

\begin{lemma}
\label{lem:Nu}
Let $\Gamma$ be a connected $G$-edge-transitive graph.  Let $\{u,v\}$ be an edge of $\Gamma$ and let $N$ be a normal subgroup of $G$, such that $N_v^{\Gamma(v)}$ is transitive. Let $A$ be the set of vertices $a$ of $\Gamma$ such that there is a path of even length between $u$ and $a$. Then $N$ is transitive on $A$.
\end{lemma}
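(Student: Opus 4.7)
The plan is to proceed by induction on the length $2k$ of an even path from $u$ to a vertex $a\in A$. The base case $k=0$ is trivial since $a=u\in N\cdot u$. For the inductive step, given an even path $u=w_0,w_1,\ldots,w_{2k+2}=a$, the hypothesis places $w_{2k}$ in $N\cdot u$, so there is $n\in N$ with $n(u)=w_{2k}$. It then suffices to find $n'\in N_{w_{2k+1}}$ with $n'(w_{2k})=w_{2k+2}$, for then $n'n(u)=a$; such an $n'$ exists provided $N_{w_{2k+1}}$ acts transitively on $\Gamma(w_{2k+1})$, since $w_{2k},w_{2k+2}\in\Gamma(w_{2k+1})$.

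The key observation allowing this is that normality of $N$ transports the transitivity of $N_v^{\Gamma(v)}$ to every vertex in the $G$-orbit of $v$: for $g\in G$ with $g(v)=x$, we have $N_x=gN_vg^{-1}$, and the action of $N_x$ on $\Gamma(x)$ is permutationally equivalent via $g$ to that of $N_v$ on $\Gamma(v)$. The remaining step is therefore to show that every $w_{2k+1}$ arising in the induction lies in the $G$-orbit of $v$.

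Since $w_{2k+1}$ is a neighbour of $w_{2k}$, and $w_{2k}$ lies in the $G$-orbit of $u$, it suffices to show that every neighbour of a vertex in the $G$-orbit of $u$ lies in the $G$-orbit of $v$. This is a short case analysis via Lemma~\ref{lem:Get}: in cases~(1) and~(2), $G$ is vertex-transitive, so $\V(\Gamma)$ is a single $G$-orbit and the statement is vacuous; in case~(3), $\Gamma$ is bipartite with the $G$-orbits being the two parts of the bipartition, so a neighbour of a vertex in the part of $u$ automatically lies in the opposite part, which is the part containing $v$.

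The main conceptual obstacle is this last case analysis, for it is the only point where the edge-transitive structure (rather than merely the local hypothesis on $N_v^{\Gamma(v)}$) is essentially used; in fact it captures precisely the reason one needs to consider paths of \emph{even} length when $\Gamma$ is bipartite. Once it is in place, the normality of $N$ propagates the local transitivity along the path and the induction closes cleanly.
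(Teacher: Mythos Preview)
Your proof is correct and follows essentially the same inductive scheme as the paper's. The only cosmetic difference is that where the paper invokes Corollary~\ref{cor:Get2} to place the penultimate vertex $w_{2k+1}$ in $v^G$ (noting there is an even-length walk from $v$ to $w_{2k+1}$), you instead appeal to the induction hypothesis to put $w_{2k}$ in $u^G$ and then run the case split of Lemma~\ref{lem:Get} by hand; these amount to the same thing.
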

\begin{proof}
Let $a\in A$. Then there is a path of even length $2\ell$ between $u$ and $a$. We show that $a\in u^N$ by induction on $\ell$. This is clearly true if $\ell=0$. Assume that the statement is true for some $\ell\geq 0$. Let $y\in u^G$ such that there is a path $P$ of length $2\ell+2$ between $u$ and $y$. Let $x$ be the vertex before $y$ on $P$, and let $w$ be the vertex before $x$. There is a path of length $2l$ between $u$ and $w$ and hence, by the induction hypothesis, $w\in u^N$. There is also a path of even length between $v$ and $x$ hence, by Corollary~\ref{cor:Get2}, $x\in v^G$ and hence $N_x^{\Gamma(x)}$ is transitive. It follows that there is an element of $N$ mapping $w$ to $y$ and therefore $y\in u^N$, completing the induction. 
\end{proof}

\begin{lemma}
\label{cor:powerorder}
Let $p$ be a prime and let $\Gamma$ be a connected $G$-edge-transitive regular graph. Let $u$ be a vertex of $\Gamma$ and let $A$ be the set of vertices $a$ of $\Gamma$ such that there is a path of even length between $u$ and $a$. Suppose that $|A|$ is a power of $p$. Then either $G$ contains a semiregular element, or $\Gamma$ is bipartite and there exist $p$ vertices of $\Gamma$ having the same neighbourhood.
\end{lemma}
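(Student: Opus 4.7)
The plan is to extract the required element from the centre of a Sylow $p$-subgroup of a suitable subgroup of $G$. By Lemma~\ref{lem:Get} and Corollary~\ref{cor:Get2}, the set $A$ equals $\V(\Gamma)$ if $\Gamma$ is not bipartite, and equals the bipartition class containing $u$ otherwise; in the bipartite case the regularity of $\Gamma$ forces the other class $B$ to have the same size as $A$, so both classes have $p$-power size. Let $G^+$ equal $G$ in the non-bipartite case and the (index-at-most-$2$) subgroup of $G$ preserving the bipartition in the bipartite case; then the orbits of $G^+$ on $\V(\Gamma)$ are exactly $\V(\Gamma)$ in the first case and the two classes $A$ and $B$ in the second, each of $p$-power size. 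A routine Sylow-counting argument (a vertex stabiliser in $G^+$ has $p$-part bounded by a Sylow $p$-subgroup of itself, so the $P$-orbit of a vertex has size at least the $p$-power size of the ambient part) then shows that any Sylow $p$-subgroup $P$ of $G^+$ remains transitive on each $G^+$-orbit.

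Next I would invoke the folklore fact that if $P$ is a $p$-group acting on a set and $z$ is a central element of $P$ of order $p$, then on each $P$-orbit $z$ acts either trivially or semiregularly, with all cycles of length $p$. Indeed, if $z$ fixes any point of an orbit then, being central in $P$, it fixes the whole orbit; otherwise $z$ has no fixed points on that orbit and, having prime order $p$, all its cycles there have length $p$. Since $P$ is a nontrivial $p$-group, we may pick $z\in Z(P)$ of order $p$.

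If $\Gamma$ is not bipartite then $P$ is transitive on $\V(\Gamma)$, so the chosen $z\ne 1$ is semiregular on $\V(\Gamma)$. If $\Gamma$ is bipartite and $z$ acts nontrivially on both $A$ and $B$, then $z$ is again semiregular on $\V(\Gamma)$. Otherwise, without loss of generality $z$ fixes $A$ pointwise while acting semiregularly on $B$; then for any $b\in B$ with $zb\ne b$, the neighbourhood $\Gamma(b)$ lies inside $A$ and is therefore fixed pointwise by $z$, so $\Gamma(z^ib)=z^i\Gamma(b)=\Gamma(b)$ for every $i$, and the $p$ distinct vertices $b, zb, \ldots, z^{p-1}b$ of $B$ all share the neighbourhood $\Gamma(b)$, yielding the second conclusion. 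The only step requiring any care is the initial Sylow-transitivity on each part, which is routine given that the parts have $p$-power order.
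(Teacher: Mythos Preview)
Your argument is correct and follows essentially the same route as the paper: pick a central element $z$ of order $p$ in a Sylow $p$-subgroup acting transitively on each bipartition class, and observe that $z$ is either semiregular on $\V(\Gamma)$ or fixes one class pointwise, forcing $p$ vertices of the other class to share a neighbourhood. The only cosmetic difference is that you pass explicitly to the bipartition-preserving subgroup $G^+$ before taking a Sylow $p$-subgroup, whereas the paper works directly with a Sylow $p$-subgroup of $G$ and cites \cite[Theorem~3.4$'$]{wielandt} for its transitivity on $A$ and $B$.
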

\begin{proof}
Let $B$ be the set of vertices $b$ of $\Gamma$ such that there is a path of odd length between $u$ and $b$. If $\Gamma$ is not bipartite, then $A=B=\V(\Gamma)$, while if $\Gamma$ is bipartite, the parts are $A$ and $B$ and, since $\Gamma$ is regular, $|A|=|B|$. 

 Let $S$ be a Sylow $p$-subgroup of $G$. By \cite[Theorem 3.4']{wielandt}, $S$ is transitive on both $A$ and $B$. Let $z$ be an element of order $p$ in the centre of $S$. Then, for each $S$-orbit, $z$ acts either trivially or semiregularly on it. Since $z$ has order $p$, it cannot act trivially on both $A$ and $B$. If $z$ acts semiregularly on both $A$ and $B$, then $z$ is a semiregular automorphism contained in $G$ and the result holds. Without loss of generality, we may thus assume that $z$ acts trivially on $A$ and semiregularly on $B$ and hence $A\neq B$ and $\Gamma$ is bipartite.

Let $b\in B$. All the neighbours of $b$ are contained in $A$, and thus $z$ fixes all of them. It follows that all the vertices in the $\langle z\rangle$-orbit of $b$ have the same neighbourhood. Since the order of $z$ is $p$, it follows that $|b^{\langle z\rangle}| = p$, which completes the proof of the lemma.
\end{proof}

 A graph is called {\em unworthy} if two of its vertices have the same neighbourhood. Unworthy $4$-valent edge-transitive graphs were classified in \cite{PW}. 

\begin{lemma}\cite[Lemma~4.3]{PW}
\label{lem:unworthy}
Let $\Gamma$ be a connected edge-transitive $4$-valent unworthy graph. Then one of the following occurs:
\begin{enumerate}
\item $\Gamma\cong \W(n,2)$ for some $n\ge 3$, 
\item $\Gamma\cong \DD\Lambda$ for some arc-transitive, $4$-valent graph $\Lambda$. 
\end{enumerate}
\end{lemma}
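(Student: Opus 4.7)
I would work with the \emph{twin} relation on $\V(\Gamma)$, where $u \equiv v$ iff $u = v$ or $\Gamma(u) = \Gamma(v)$. This is $\Aut(\Gamma)$-invariant, its classes are independent sets (twins are non-adjacent in a simple graph), and whenever two distinct classes $C, C'$ have at least one edge between them the induced bipartite subgraph on $C \cup C'$ is the complete bipartite graph $\K_{|C|,|C'|}$. In particular, every neighbourhood $\Gamma(v)$ is a disjoint union of entire twin classes.

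By Lemma~\ref{lem:Get}, either $\Aut(\Gamma)$ is vertex-transitive or $\Gamma$ is bipartite with parts equal to the two $\Aut(\Gamma)$-orbits. First I would dispose of the vertex-transitive case. There all twin classes share a common size $k$; the observation above forces $k \mid 4$, and unworthiness rules out $k = 1$. If $k = 4$, connectedness collapses $\Gamma$ to $\K_{4,4} \cong \W(4,2)$. If $k = 2$, then $\Gamma/{\equiv}$, regarded as a simple graph, is connected and $2$-regular, hence a cycle $\C_n$ with $n \ge 3$; expanding each vertex to a class of size $2$ and each edge to a $\K_{2,2}$ recovers $\Gamma \cong \W(n,2)$.

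In the remaining case, $\Gamma$ is bipartite with parts $U, W$ being the $\Aut(\Gamma)$-orbits and with no automorphism swapping the parts. Let $a, b$ be the common twin-class sizes in $U, W$; then $a, b \in \{1, 2, 4\}$ and $\max(a,b) \ge 2$. I would verify by a short case analysis that $\max(a, b) = 4$ forces $\Gamma \cong \K_{4,4}$ and $a = b = 2$ forces $\Gamma \cong \W(n, 2)$ for some even $n$, and that both graphs are vertex-transitive, so neither is compatible with the present case. Hence $\{a, b\} = \{1, 2\}$; say $a = 1$ and $b = 2$. I would then define $\Lambda$ to have vertex set $W/{\equiv}$ and edge set $U$, with each $u \in U$ joining the two twin classes comprising $\Gamma(u)$. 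The condition $a = 1$ ensures that distinct elements of $U$ give distinct edges (otherwise the offending vertices would be twins), so $\Lambda$ is simple; the $\K_{|C|,|C'|}$ structure yields $4$-valency; connectedness of $\Lambda$ follows from that of $\Gamma$. Finally, the four edges of $\Gamma$ incident to any $u \in U$ correspond to the two arcs of $\Lambda$ along the edge $u$ (each arc giving two such $\Gamma$-edges, one per vertex of its tail twin class), so $\Aut(\Gamma)$-edge-transitivity of $\Gamma$ translates into arc-transitivity of $\Lambda$, and a direct check of incidences identifies $\Gamma \cong \DD\Lambda$.

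The principal obstacle is this reconstruction step in the asymmetric bipartite case $(a, b) = (1, 2)$: establishing simplicity, $4$-valency, connectedness, and arc-transitivity of the constructed $\Lambda$. The rest of the argument is largely bookkeeping once the twin relation and the $\K_{|C|, |C'|}$ structure between its classes are in hand.
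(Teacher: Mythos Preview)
The paper does not prove this lemma; it is quoted verbatim from \cite[Lemma~4.3]{PW} and used as a black box. Your argument is therefore not being compared against anything in the present paper, but it is a correct and self-contained proof: the twin relation gives the right decomposition, the divisibility constraint $k\mid 4$ handles the vertex-transitive case cleanly, and in the genuinely bipartite case your elimination of $\max(a,b)=4$ and $a=b=2$ (both yielding vertex-transitive graphs, hence impossible in this branch) correctly isolates $\{a,b\}=\{1,2\}$, after which the reconstruction of $\Lambda$ goes through as you describe. One small point worth making explicit in the write-up: when $b=4$ you should note that this forces $a=4$ as well (the common neighbourhood of a size-$4$ twin class in $W$ is a set of four mutual twins in $U$), so that the $K_{4,4}$ conclusion really follows.
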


\begin{lemma}\cite[Lemma~2.5]{Verret}\label{lemma:main}
Let $p$ be an odd prime and let $\Gamma$ be a connected $G$-arc-transitive $4$-valent graph such that $p$ divides $|\V(\Gamma)|$. If $G$ is solvable, then $G$ contains a semiregular element of order $p$.
\end{lemma}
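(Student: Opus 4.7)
I would proceed by induction on $|\V(\Gamma)|$, treating separately the case $p \geq 5$ and the main case $p = 3$.

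When $p \geq 5$, no solvability is needed: $G_v^{\Gamma(v)}$ is a transitive subgroup of $\Sym(4)$, so $|G_v^{\Gamma(v)}|$ divides $24$ and is coprime to $p$. Lemma~\ref{lem:leash} then forces $|G_v|$ to be coprime to $p$, whence a Sylow $p$-subgroup of $G$ is semiregular on $\V(\Gamma)$, and is nontrivial because $p \mid |\V(\Gamma)|$.

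For $p = 3$, let $N$ be a minimal normal subgroup of $G$, elementary abelian of order $q^k$ by solvability. The key observation when $q = 3$ is that each transitive subgroup of $\Sym(4)$ (namely $C_4$, $V_4$, $D_4$, $A_4$, $S_4$) has trivial normal $3$-subgroup. Since $N_v \norml G_v$, the image $N_v^{\Gamma(v)}$ is therefore trivial, and Lemma~\ref{lem:leash} forces $N_u = 1$ for all $u$. Thus $N$ is semiregular on $\V(\Gamma)$ and any element of order $3$ in $N$ suffices.

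When $q \neq 3$, I pass to the quotient graph $\Gamma_N$. By $G$-arc-transitivity, the sets $\Omega \cap \Gamma(v)$ (for $\Omega$ an $N$-orbit) have a common size, so the valency of $\Gamma_N$ divides $4$; moreover, $|\V(\Gamma_N)| = |\V(\Gamma)|/(|N|/|N_v|)$ is at least $3$ and divisible by $3$, since the orbit size is a $q$-power coprime to $3$. Hence the valency is $2$ or $4$. In the valency-$4$ case, each $N$-orbit meets $\Gamma(v)$ in a single point, so $N_v^{\Gamma(v)} = 1$; Lemma~\ref{lem:leash} then forces $N$ to be semiregular and, by the same argument, shows that the kernel $K$ of the $G$-action on $\V(\Gamma_N)$ equals $N$ (as $K_v$ must fix $\Gamma(v)$ pointwise). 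Thus $G/N$ is a connected $4$-valent arc-transitive solvable group of automorphisms of $\Gamma_N$ with $3 \mid |\V(\Gamma_N)|$, and the induction hypothesis yields a semiregular $g \in G/N$ of order $3$. Applying Schur--Zassenhaus to the preimage of $\langle g \rangle$ in $G$ (using $\gcd(|N|, 3) = 1$) produces a lift $\tilde g \in G$ of order $3$; since $\tilde g$ permutes the $N$-orbits as $g$ and hence fixes none of them, it fixes no vertex and is semiregular.

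The most delicate subcase is $q \neq 3$ with $\Gamma_N \cong C_r$, so $3 \mid r$. Let $M$ be the kernel of the action of $G$ on $\V(\Gamma_N)$; arc-transitivity forces $G/M$ to be isomorphic to the dihedral group of order $2r$. Since $M_v$ preserves the partition of $\Gamma(v)$ into its two $N$-orbit-intersections of size $2$, $M_v^{\Gamma(v)}$ lies in the setwise stabiliser of that partition, a $2$-group of order $8$. Lemma~\ref{lem:leash} then yields $\gcd(|M_v|, 3) = 1$, and since the $M$-orbits coincide with the $N$-orbits (of $q$-power size), $\gcd(|M|, 3) = 1$. Applying Schur--Zassenhaus to the preimage in $G$ of a subgroup of order $3$ in the rotation subgroup of $G/M$ produces an element $\tilde g$ of order $3$ whose action on $\V(\Gamma_N)$ is a semiregular rotation; as before, $\tilde g$ fixes no vertex and is semiregular. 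The main obstacle is this last subcase, in particular the verification that $\gcd(|M|, 3) = 1$.
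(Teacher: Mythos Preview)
The paper does not supply a proof of this lemma; it is quoted verbatim from \cite[Lemma~2.5]{Verret} and used as a black box. There is therefore nothing in the present paper to compare your argument against.

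That said, your proof is correct and self-contained. The case $p\ge 5$ is immediate from Lemma~\ref{lem:leash}, and for $p=3$ your trichotomy on a minimal normal subgroup $N$ covers everything: when $q=3$ the observation that no transitive subgroup of $\Sym(4)$ has a nontrivial normal $3$-subgroup forces $N$ itself to be semiregular; when $q\ne 3$ and the quotient stays $4$-valent you correctly identify $K=N$ and invoke induction (Lemma~\ref{lemma:coprime} would also do the lifting); and in the cycle-quotient case your verification that $|M|$ is coprime to $3$ is the crux and is sound, since $|M|=|M_v|\cdot|v^M|$ with $|M_v|$ a $2$-group by Lemma~\ref{lem:leash} and $|v^M|=|v^N|$ a power of $q$. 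The Schur--Zassenhaus lift then produces the required semiregular element of order $3$. One small point worth making explicit for the induction to be well-founded: in the $4$-valent quotient case you do have $|\V(\Gamma_N)|<|\V(\Gamma)|$ because $N\ne 1$, and the remaining cases do not appeal to the inductive hypothesis at all.
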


Let $G$ be a permutation group on a set $\Omega$ and let $K$ be a normal subgroup of $G$. Then the partition  $\Omega/K = \{\omega^K : \omega \in \Omega\}$ into the set of $K$-orbits is $G$-invariant. There is thus a natural induced action of $G$ on $\Omega/K$. Clearly, $K$ is always contained in the kernel of this action, and hence there is an induced action of $G/K$ on $\Omega/K$. The next lemma deals with the situation where the latter action is faithful; this is equivalent to saying that the kernel of the action of $G$ on $\Omega/K$ equals $K$.

\begin{lemma}\cite[Lemma~2.3]{Verret}\label{lemma:coprime}
Let $G$ be a permutation group acting on a set $\Omega$ and let $K$ be a normal subgroup of $G$ such that the induced action of $G/K$ on $\Omega/K$ is faithful. If $G/K$ contains an element of some order $r$ coprime to $|K|$ that acts semiregularly on $\Omega/K$, then $G$ contains an element of order $r$ acting semiregularly on $\Omega$.
\end{lemma}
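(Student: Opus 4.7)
My approach is to lift $\bar g$ to an element $h \in G$ with order exactly $r$ (not merely an order divisible by $r$) whose image still generates $\langle \bar g\rangle$, and then deduce semiregularity of $h$ on $\Omega$ from that of $\bar g$ on $\Omega/K$.

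First I would pick an arbitrary preimage $g \in G$ of $\bar g$, and write $n$ for its order. Since $gK = \bar g$ has order $r$ in $G/K$, we have $r \mid n$, so $n = rk$ for some positive integer $k$. A direct check shows that $g^r$ has order exactly $k$, and since $g^r \in K$, Lagrange gives $k \mid |K|$. The coprimality hypothesis $\gcd(r,|K|)=1$ then forces $\gcd(r,k)=1$. The candidate I consider is $h := g^k$: since $g$ has order $rk$, the element $h$ has order $rk/\gcd(rk,k) = r$, and since $\gcd(r,k)=1$, the image $hK = \bar g^k$ is still a generator of $\langle \bar g\rangle$, so $\langle hK\rangle = \langle \bar g\rangle$ acts semiregularly on $\Omega/K$.

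To finish, I would argue by contradiction. Suppose $h^i$ fixes some $\omega \in \Omega$ for some $1 \le i \le r-1$. Reducing modulo $K$, the element $(hK)^i = \bar g^{ki}$ fixes the $K$-orbit $\omega^K$. Semiregularity of $\langle \bar g\rangle$ on $\Omega/K$ then forces $r \mid ki$, and combined with $\gcd(r,k)=1$ this yields $r \mid i$, contradicting $i < r$. Hence no non-trivial power of $h$ fixes any point of $\Omega$, which is exactly the semiregularity of $h$. I do not anticipate a real obstacle; the only honest idea is the substitution of $g$ by the power $g^k$, which strips off the ``$K$-component'' of $g$ and pins the order down to exactly $r$. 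The faithfulness of the $G/K$-action on $\Omega/K$ plays no active role in the argument (it is subsumed by semiregularity), but one could alternatively invoke Schur--Zassenhaus on $\langle g\rangle K$ to obtain $h$ as a generator of a complement to $K$.
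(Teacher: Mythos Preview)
Your argument is correct. The key steps---lifting $\bar g$ to some $g\in G$, observing that $g^r\in K$ forces the cofactor $k=|g|/r$ to divide $|K|$ and hence be coprime to $r$, and then replacing $g$ by $h=g^k$ to pin the order down to exactly $r$---are all sound. The semiregularity check at the end is also fine: if $h^i$ fixes $\omega$ then $(hK)^i$ fixes $\omega^K$, and since $\langle hK\rangle=\langle\bar g\rangle$ is semiregular (hence, by the paper's definition, faithful) on $\Omega/K$, this forces $(hK)^i=1$, i.e.\ $r\mid i$. Your remark that the global faithfulness hypothesis on $G/K$ is not actually needed---it is subsumed by the semiregularity of $\langle\bar g\rangle$---is also correct.

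As for comparison: the paper does not supply its own proof of this lemma; it is quoted verbatim as \cite[Lemma~2.3]{Verret} and used as a black box. Your argument is the natural direct proof, and the alternative you mention via Schur--Zassenhaus on $\langle g\rangle K$ would work equally well (indeed, in a cyclic group the complement is exactly the power you wrote down).
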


Let $\Gamma$ be a graph and let $N\leq\Aut(\Gamma)$. The {\em quotient graph} $\Gamma/N$ is the graph whose vertices are the $N$-orbits with  two such $N$-orbits $v^N$ and $u^N$ adjacent whenever there is a pair of vertices $v'\in v^N$ and $u'\in u^N$ that are adjacent in $\Gamma$.  Clearly, if $\Gamma$ is connected then so is $\Gamma/N$. We will need a few facts about quotient graphs which we collect in the following lemma (see for example \cite[Lemmas 1.4, 1.5, 1.6 and Remark 1.8]{P85}).

\begin{lemma}\label{lem:quo}
Let $\Gamma$ be a connected locally $G$-arc-transitive graph and let $N$ be a normal subgroup of $G$. Let $K$ be the kernel of the action of $G$ on $N$-orbits. Then 
\begin{enumerate}
\item $G/K$ acts faithfully on $\Gamma/N$ and $\Gamma/N$ is locally $G/K$-arc-transitive, and \label{local}
\item the valency of a vertex $v^N$ in $\Gamma/N$ divides the number of orbits of $K_v^{\Gamma(v)}$. \label{val}
\item In particular, if $G_v^{\Gamma(v)}$ is $2$-transitive and the valency of $v^N$ is greater than $1$, then the valency of $v^N$ is the same as the valency of $v$ and $K_v^{\Gamma(v)}=1$. \label{val2}
\item If $G_v^{\Gamma(v)}$ is $2$-transitive, then $(G/K)_{v^N}^{(\Gamma/N)(v^N)}$ is also $2$-transitive. \label{2trans} 
\end{enumerate}
\end{lemma}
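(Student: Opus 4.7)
The plan is to extract all four parts from a single structural observation. For every vertex $v$, let $d$ denote the valency of $v^N$ in $\Gamma/N$; then the fibres of the natural map $\Gamma(v)\to(\Gamma/N)(v^N)$, $w\mapsto w^N$, form a partition of $\Gamma(v)$ into $d$ blocks. Two invariance properties will drive the whole argument: since $N$ is normal in $G$, the set of $N$-orbits is permuted by $G$, so this partition is $G_v$-invariant; and since $K$ fixes every $N$-orbit setwise, each individual block is preserved by $K_v$. Part~(\ref{local}) is then essentially formal: faithfulness of $G/K$ on $\Gamma/N$ is the defining property of $K$, while the image $G_vK/K$ of the transitive group $G_v$ acts transitively on the blocks, hence on $(\Gamma/N)(v^N)$.

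For part~(\ref{val}) I would combine transitivity of $G_v$ on $\Gamma(v)$ with normality of $K_v$ in $G_v$: the group $G_v$ permutes the $K_v$-orbits and is transitive on blocks, so every block contains the same number $m$ of $K_v$-orbits, and the total $dm$ is divisible by $d$.

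For parts~(\ref{val2}) and~(\ref{2trans}) I would invoke the primitivity of $G_v^{\Gamma(v)}$, which is implied by $2$-transitivity. Its normal subgroup $K_v^{\Gamma(v)}$ is then trivial or transitive on $\Gamma(v)$; transitivity is incompatible with preserving each of $d\ge 2$ blocks individually, so under the assumption $d>1$ we obtain $K_v^{\Gamma(v)}=1$. Primitivity also rules out any nontrivial $G_v^{\Gamma(v)}$-invariant partition of $\Gamma(v)$, so the blocks must be singletons and $d=|\Gamma(v)|$, proving~(\ref{val2}). With $K_v^{\Gamma(v)}=1$ the map $w\mapsto w^N$ becomes a $G_v$-equivariant bijection $\Gamma(v)\to(\Gamma/N)(v^N)$, which transports the $2$-transitive action of $G_v$ on $\Gamma(v)$ to a $2$-transitive subaction of $(G/K)_{v^N}$ on $(\Gamma/N)(v^N)$, yielding~(\ref{2trans}).

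I do not anticipate any deep obstacle; the main bookkeeping concern is distinguishing between what $K$ preserves setwise (every $N$-orbit) and what it preserves pointwise, and noting that $G_v$ may permute the $N$-orbits meeting $\Gamma(v)$ nontrivially while still respecting the induced partition of $\Gamma(v)$ as a whole. Everything else reduces to standard facts about orbits of normal subgroups and about normal subgroups of primitive groups.
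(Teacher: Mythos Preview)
The paper does not prove this lemma at all; it simply collects four facts about quotient graphs and attributes them to Praeger~\cite{P85} (Lemmas~1.4, 1.5, 1.6 and Remark~1.8 there). So there is no in-house argument to compare against, and your self-contained treatment is a genuine addition rather than a paraphrase.

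Your proof is correct and the organising idea---studying the $G_v$-invariant partition of $\Gamma(v)$ into fibres over $(\Gamma/N)(v^N)$ and noting that $K_v$ preserves each fibre---is exactly the right one. A couple of small points worth making explicit when you write it out: for~(\ref{local}) you implicitly use surjectivity of $\Gamma(v)\to(\Gamma/N)(v^N)$, which follows since any edge between $v^N$ and an adjacent orbit can be translated by an element of $N$ to an edge at $v$; and for~(\ref{2trans}) your argument via the equivariant bijection assumes the conclusion of~(\ref{val2}), hence $d>1$, so you should dispose of the degenerate case $d=1$ separately (trivial, since every group is $2$-transitive on a singleton). With those two sentences added the argument is complete.
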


In the proof of the main results, we will need to consider the action of $\Aut(\Gamma)$ not only on the vertex-set, but also on the edge-set of $\Gamma$. Let us state a two easy results about this action.

\begin{lemma}\label{lemma:technical}
If $\Gamma$ is a connected graph with at least $3$ vertices, then $\Aut(\Gamma)$ acts faithfully on $\E(\Gamma)$.
\end{lemma}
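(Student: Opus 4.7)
The plan is to show that the kernel $K$ of the action of $\Aut(\Gamma)$ on $\E(\Gamma)$ is trivial. Take $g\in K$. Since $g$ fixes every edge $\{u,v\}$ setwise, for each edge it either fixes both endpoints or swaps them. So the only way $g$ can be nontrivial is by swapping the endpoints of at least one edge; the main task is to rule this out using connectedness and $|\V(\Gamma)|\geq 3$.

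Suppose for contradiction that $g$ moves some vertex $u$. For every neighbour $v$ of $u$, the edge $\{u,v\}$ is fixed setwise by $g$, and since $g(u)\neq u$, we must have $g(u)=v$ and $g(v)=u$. As $g(u)$ is a single vertex, this forces $u$ to have exactly one neighbour, namely $v=g(u)$. Applying the same argument to $v$ (which is moved, since $g(v)=u\neq v$), we find that $v$ has a unique neighbour, necessarily $u$. Hence the connected component of $u$ in $\Gamma$ consists of just $\{u,v\}$, contradicting the assumption that $\Gamma$ is connected with at least $3$ vertices. Therefore no vertex is moved by $g$, so $g=1$.

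The only obstacle is the bookkeeping around the ``swapping'' case, and the hypothesis $|\V(\Gamma)|\geq 3$ is used exactly to exclude the single small exception $\Gamma\cong \K_2$, where the nontrivial automorphism swaps the two vertices while fixing the unique edge.
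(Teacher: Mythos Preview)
Your proof is correct. The paper itself does not give a proof of this lemma, stating only that it ``is an easy exercise''; your argument fills in exactly the kind of elementary verification the authors had in mind.
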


The proof of Lemma~\ref{lemma:technical} is an easy exercise.



\begin{corollary}\label{cor:v+e}
Let $p$ be a prime and let $\Lambda$ be a connected graph with at least $3$ vertices such that $|\V(\Lambda)|$ and $|\E(\Lambda)|$ are both powers of $p$. Let $G\leqslant \Aut(\Lambda)$ such that $G$ is transitive on both $\V(\Lambda)$ and $\E(\Lambda)$. Then $G$ contains an element which acts semiregularly on each of $\V(\Lambda)$ and $\E(\Lambda)$.
\end{corollary}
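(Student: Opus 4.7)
The plan is to exhibit a single element lying in the centre of a Sylow $p$-subgroup of $G$ and show that it is semiregular on both sets at once. First I would choose a Sylow $p$-subgroup $P \le G$ and argue that $P$ is transitive on $\V(\Lambda)$ and on $\E(\Lambda)$. Since $G$ is transitive on $\V(\Lambda)$, the index $[G:G_v] = |\V(\Lambda)|$ is a power of $p$, so $G_v$ contains a Hall $p'$-subgroup of $G$; a short counting argument then gives $|P \cdot v| \geq |\V(\Lambda)|$, so $P$ is already transitive on $\V(\Lambda)$. The same reasoning applied to the edge action shows $P$ is transitive on $\E(\Lambda)$.

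Since $|\V(\Lambda)| \geq 3$ is a power of $p$, the group $P$ is a nontrivial $p$-group, hence $Z(P)$ is nontrivial and contains an element $z$ of order $p$. Here I would invoke the standard fact that, for any $P$-set $\Omega$, the fixed-point set $\mathrm{Fix}_\Omega(z)$ is $P$-invariant (because $z$ commutes with $P$); transitivity of $P$ on $\Omega$ then forces $\mathrm{Fix}_\Omega(z)$ to be either empty or all of $\Omega$. Applied with $\Omega = \V(\Lambda)$ and $\Omega = \E(\Lambda)$, and using $|z|=p$, this yields that on each of these sets $z$ acts either trivially or with all cycles of length $p$, i.e.\ semiregularly.

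It remains to exclude the trivial alternatives. By Lemma~\ref{lemma:technical}, $\Aut(\Lambda)$ acts faithfully on $\E(\Lambda)$ (here we use the hypothesis $|\V(\Lambda)|\geq 3$), so the nonidentity element $z$ acts nontrivially, and thus semiregularly, on $\E(\Lambda)$. On the other hand, if $z$ were trivial on $\V(\Lambda)$ then it would fix every edge as well, contradicting nontriviality on $\E(\Lambda)$; hence $z$ is also semiregular on $\V(\Lambda)$, completing the proof.

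I do not anticipate a real obstacle: once one notices that the Sylow $p$-subgroup is transitive on both $\V(\Lambda)$ and $\E(\Lambda)$ and that faithfulness on edges is available from Lemma~\ref{lemma:technical}, the rest is the familiar central-element-of-a-transitive-$p$-group argument already used in Lemma~\ref{cor:powerorder}.
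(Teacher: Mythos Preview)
Your proposal is correct and follows essentially the same line as the paper: take a Sylow $p$-subgroup, observe it is transitive on both $\V(\Lambda)$ and $\E(\Lambda)$ (the paper cites Wielandt's theorem for this, you give an equivalent counting sketch), pick a central element $z$ of order $p$, and rule out triviality on each set via the all-or-nothing fixed-point argument together with Lemma~\ref{lemma:technical}. The only cosmetic difference is that the paper excludes triviality on $\V(\Lambda)$ directly from $G\le\Aut(\Lambda)$ acting faithfully on vertices, whereas you route it through edge-faithfulness; both are fine.
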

\begin{proof}
Let $S$ be a Sylow $p$-subgroup of $G$. By \cite[Theorem 3.4']{wielandt}, $S$ acts transitively on both $\V(\Lambda)$ and $\E(\Lambda)$. Let $z$ be an element of order $p$ in the center of $S$. If $z$ fixes a vertex, then, being central in a vertex-transitive group, it must fix all vertices, which is a contradiction. Similarly, if $z$ fixes an edge, it must fix all edges and, by Lemma~\ref{lemma:technical}, this is a contradiction. We thus conclude that $z$ is semiregular on both $\V(\Lambda)$ and $\E(\Lambda)$.
\end{proof}

\section{Proofs of the main theorems}
\subsection{Proof of Theorem~\ref{theorem:main3}}
Let $\Gamma$ be a connected, $G$-edge-transitive, $3$-valent graph. We must show that $G$ contains a semiregular element. 

Since the valency of $\Gamma$ is odd, Corollary~\ref{cor:Get3} implies that $\Gamma$ is locally $G$-arc-transitive. Moreover, since $\Gamma$ is connected and is $3$-valent, it follows by Lemma~\ref{lem:leash} that $G_v$ is a  $\{2,3\}$-group for every vertex $v$. An element of $G$ of order a prime greater than $3$ is thus necessarily semiregular. We may thus assume that $G$ contains no such element and hence $G$ itself is a $\{2,3\}$-group. It follows by Burnside's Theorem that $G$ is solvable. In particular, $G$ contains a non-trivial normal elementary abelian $p$-group $N$ for some $p\in \{2,3\}$. If $N$ is semiregular, then any of its nontrivial elements is semiregular, and the conclusion of the theorem holds. We may thus assume that $N$ is not semiregular and hence $N_v^{\Gamma(v)}\neq 1$ for some vertex $v\in\V(\Gamma)$. Since  $N_v^{\Gamma(v)}$ is a normal subgroup of a transitive permutation group $G_v^{\Gamma(v)}$ of degree $3$, it follows that $N_v^{\Gamma(v)}$ is transitive and $p=3$. Let $u$ be a neighbour of $v$ and let $A$ be the set of vertices 
$a$ of $\Gamma$ such that there is a path of even length between $u$ and $a$. By Lemma~\ref{lem:Nu}, $N$ is transitive on $A$. In particular $|A|$ is a power of $3$. In view of Lemma~\ref{cor:powerorder}, it follows that $\Gamma\cong \K_{3,3}$. Since $G$ is edge-transitive, $9$ divides $|G|$ and hence $G$ contains the unique Sylow $3$-subgroup of $\Aut(\Gamma)$, which is easily seen to contain a semiregular element. This concludes the proof.

\subsection{Proof of Theorem~\ref{theorem:main}}

Let $\Gamma$ be a connected, $G$-edge-transitive, $4$-valent graph. Since $\Gamma$ is $4$-valent, it follows that for every $v\in \V(\Gamma)$, $G_v^{\Gamma(v)}$ is a subgroup of $\Sym(4)$ and thus a $\{2,3\}$-group. Lemma~\ref{lem:leash} then implies that $G_v$ is also a $\{2,3\}$-group for every vertex $v\in \V(\Gamma)$. An element of $G$ of order a prime greater than $3$ is thus necessarily semiregular. Hence we may assume that $G$ contains no such element. This establishes the following:
\medskip 

\noindent
{\bf Fact 1}:  $G$  is a $\{2,3\}$-group.
\medskip

If $\Gamma$ is bipartite, let $A$ and $B$ be the two parts. Otherwise, let $A=B=\V(\Gamma)$. By Corollary~\ref{cor:Get2}, $G$ is transitive on both $A$ and $B$ and $|A|=|B|$. Since $G$ is a $\{2,3\}$-group, no prime other than $2$ or $3$ can divide $|A|$. If $|A|$ is a power of $2$ then, by Lemma~\ref{cor:powerorder}, either $G$ contains a semiregular element, or $\Gamma$ is unworthy. In the latter case, Lemma~\ref{lem:unworthy} implies that $\Gamma$ is isomorphic to either a wreath graph, or to $\DD\Lambda$ for some arc-transitive, $4$-valent graph $\Lambda$. The result follows in each case. We may thus assume that $3$ divides $|A|$.

\medskip

\noindent
{\bf Fact 2}:  $3$ divides $|A|$.
\medskip

If $G_v$ is a $2$-group for every $v\in \V(\Gamma)$, then every element of order $3$ in $G$ is semiregular and the conclusion holds. We may thus assume that there exists a vertex $w$ such that $3$ divides $|G_w|$. By Lemma~\ref{lem:leash}, there exists a vertex $u$ such that $3$ divides $|G_u^{\Gamma(u)}|$. By Corollary~\ref{cor:Get3}, it follows that $\Gamma$ is locally $G$-arc-transitive. In particular, $4$ also divides $|G_u^{\Gamma(u)}|$, and hence $G_u^{\Gamma(u)}$ must be $2$-transitive. Hence:

\medskip 

\noindent
{\bf Fact 3}:  $\Gamma$ is locally $G$-arc-transitive and there exists a vertex $u$ such that $G_u^{\Gamma(u)}$ is $2$-transitive.

\medskip

Since $G$ is a $\{2,3\}$-group, Burnside's Theorem implies that $G$ is solvable and thus contains a non-trivial normal elementary abelian $p$-group $N$ for some $p\in\{2,3\}$. 

 Suppose that $N$ is transitive on $A$ or $B$. Then, since $3$ divides $|A|$, $p=3$ and $|A|$ is a power of $3$. In view of Lemma~\ref{cor:powerorder}, we may assume that $\Gamma$ is bipartite and at least three vertices of $\Gamma$ have the same neighbourhood.  It is not difficult to see that, since $\Gamma$ is edge-transitive, this implies that $\Gamma\cong K_{4,4}$, which contradicts the fact that $3$ divides $|\V(\Gamma)|$. This allows us to assume:
\medskip 

\noindent
{\bf Fact 4}: $N$ is intransitive on both $A$ and $B$.
\medskip

Together with Lemma~\ref{lem:Nu}, Fact 4 immediately implies that $N_v^{\Gamma(v)}$ is intransitive for every vertex $v\in\V(\Gamma)$. An intransitive normal subgroup of a $2$-transitive group is necessarily trivial, hence $N_u^{\Gamma(u)}=1$. Let $v$ be a neighbour of $u$. If $N_v^{\Gamma(v)}=1$ then, by Lemma~\ref{lem:leash}, $N$ is semiregular. We may thus assume that $N_v^{\Gamma(v)}\neq 1$. 

Thus $N_v^{\Gamma(v)}$ is a nontrivial intransitive normal elementary abelian $p$-subgroup of $G_v^{\Gamma(v)}$, which is itself a transitive subgroup of $\Sym(4)$. A simple examination of the transitive subgroups of $\Sym(4)$ yields that $p=2$, $G_v^{\Gamma(v)}$ is a $2$-group and $N_v^{\Gamma(v)}$ has two orbits. Moreover, since $G_v^{\Gamma(v)} \not \cong G_u^{\Gamma(u)}$, this also implies that $\Gamma$ is  bipartite. We thus have:

\medskip 

\noindent
{\bf Fact 5}:  $p=2$, $G_v^{\Gamma(v)}$ is a $2$-group, $N_u^{\Gamma(u)}=1$, $N_v^{\Gamma(v)}$ has two orbits of length $2$ and $\Gamma$ is bipartite with parts $A$ and $B$.
\medskip

Let us now consider the quotient graph $\Gamma/N$. Since $\Gamma$ is bipartite and $N$ preserves the parts, $\Gamma/N$ is bipartite with parts $A/N$ and $B/N$. Since $N$ is a $2$-group while $|A|=|B|$ is divisible by $3$, it follows that $|A/N|$ and $|B/N|$ are both divisible by $3$. Let $K$ be the kernel of the action of $G$ on the $N$-orbits. By Lemma~\ref{lem:quo}~(\ref{local}), $G/K$ acts faithfully on $\Gamma/N$ and $\Gamma/N$ is locally $G/K$-arc-transitive.

\medskip 

\noindent
{\bf Fact 6}:  $\Gamma/N$ is a connected locally $G/K$-arc-transitive bipartite graph with parts having size divisible by $3$. 
\medskip

In particular $\Gamma/N$ has minimal valency at least $2$. On the other hand, since $N_v^{\Gamma(v)}$ has two orbits, the valency of $v^N$ in $\Gamma/N$ is, by Lemma~\ref{lem:quo}~(\ref{val}), a divisor of $2$. In particular, the valency of $v^N$ is exactly $2$. 

Since  $G_u^{\Gamma(u)}$ is $2$-transitive, it follows from Lemma~\ref{lem:quo}~(\ref{val2}) that the valency of $u^N$ is $4$ and that $K_u^{\Gamma(u)}=1$. Since $G_v^{\Gamma(v)}$ is a $2$-group, $K_v^{\Gamma(v)}$ is also a $2$-group. By Lemma~\ref{lem:leash}, $K_v$ is a $2$-group and, since $K=NK_v$, $K$ is also a $2$-group.

\medskip 

\noindent
{\bf Fact 7}: $v^N$ has valency $2$, $u^N$ has valency $4$ and  $K$ is a $2$-group.
\medskip

 Suppose that two vertices in the $G/K$-orbit of $v^N$ have the same neighbourhood. Since $G_u^{\Gamma(u)}$  is $2$-transitive, it follows from Lemma~\ref{lem:quo}~(\ref{2trans}) that $(G/K)_{u^N}^{\Gamma/N(u^N)}$ is $2$-transitive and hence all neighbours of $u^N$ have the same neighbourhood. It follows that $\Gamma/N\cong K_{2,4}$, which contradicts the fact that $3$ divides the size of the parts of $\Gamma/N$. We may thus assume that vertices in the $G/K$-orbit of $v^N$ have distinct neighbours and hence $\Gamma/N$ is the subdivision of a connected simple $4$-valent graph $\Lambda$.

\medskip

\noindent
{\bf Fact 8}:  $\Gamma/N$ is the subdivision of a connected simple $4$-valent graph $\Lambda$ of order divisible by $3$.
\medskip

By Lemma~\ref{lemma:technical}, $G/K$ acts faithfully on $\Lambda$. Finally, since $\Gamma/N$ is locally $G/K$-arc-transitive, $\Lambda$ is $G/K$-arc-transitive.

\medskip

\noindent
{\bf Fact 9}:  $G/K$ acts faithfully on $\Lambda$ and  $\Lambda$ is $G/K$-arc-transitive.
\medskip

Since $3$ divides $|\V(\Lambda)|$ and $G/K$ is solvable, it follows from Lemma~\ref{lemma:main} that there exists an element $Kg\in G/K$ of order $3$ acting semiregularly on $\V(\Lambda)$. Since $3$ is odd, $Kg$ also acts semiregularly on $\E(\Lambda)$ and hence on $\V(\Gamma/N)$. Since $3$ is coprime to $|K|$, Lemma~\ref{lemma:coprime} implies that $G$ contains a semiregular element of order $3$. This concludes the proof.

\subsection{Proof of Corollary~\ref{cor:poly}}
Let $\Gamma$ be a connected, edge-transitive graph of valency at most $4$. Suppose first that $\Gamma\cong\W(2^n,2)$ for some $n\geq 2$. Let $g$ be the automorphism of $\W(n,2)$ that swaps each pair $\{(i,0), (i,1)\}$. Note that $g$ is semiregular and, since it swaps vertices that are distance $2$ apart, it preserves the parts of the bipartition of $\Gamma$.  This concludes the proof in this case.

Suppose now that $\Gamma\cong \DD\Lambda$ for some arc-transitive, $4$-valent graph $\Lambda$ of order a power of $2$. By Corollary~\ref{cor:v+e}, there exists $g\in\Aut(\Lambda)$ such that $g$ acts semiregularly on each of $\V(\Lambda)$ and $\E(\Lambda)$. This induces an automorphism of $\Gamma=\DD\Lambda$ which maps a vertex $(v,i)$ to the vertex $(v^g,i)$ and a vertex $e\in \E(\Lambda)$ to the vertex $e^g$. This automorphism of $\Gamma$ is clearly semiregular on $\V(\Gamma)$ and preserves the two parts of the bipartition of $\Gamma$.

We now assume that we are not in one of the two cases above. By Theorems~\ref{theorem:main3} and~\ref{theorem:main}, $\Aut(\Gamma)$ contains a semiregular element. If $\Gamma$ is bipartite and locally arc-transitive, let $G$ be the subgroup of $\Aut(\Gamma)$ preserving the two parts of the bipartition. By Lemma~\ref{lem:final}, $\Gamma$ is $G$-edge-transitive and applying  Theorems~\ref{theorem:main3} and~\ref{theorem:main} again, we see that $G$ contains a semiregular element. This completes the proof of Corollary~\ref{cor:poly}.

\section{Examples}\label{sec:counterexample}
\begin{lemma}
Let $n\geq 3$, let $\Gamma=W(2^n,2)$ and recall that $\V(\Gamma)=\ZZ_{2^n}\times\ZZ_2$. Let $\sigma$ and $\tau$ be permutations of $\ZZ_{2^n}$ according to the rules
\begin{align*}
\sigma&=(1~3~5\ldots~2^n-1)(0~2~4\ldots~2^n-2),\\
\tau&\colon i\mapsto -i+4.
\end{align*}

The permutations $\sigma$ and $\tau$ have an induced action on $\V(\Gamma)$ by acting on the first coordinate. They are clearly automorphisms of $\Gamma$. For $i\in\{1,2,\ldots,2^n\}$, let $\alpha_i$ be the automorphism of $\Gamma$ interchanging $(i,0)$ and $(i,1)$ and fixing all other vertices of $\Gamma$. Let $x= \alpha_1\sigma$ and let $z=\alpha_0\alpha_2\alpha_3\ldots\alpha_{2^n-4}\alpha_{2^n-2}\alpha_{2^n-1}$, that is $z$ is the product of all $\alpha_i$'s with $i$ not congruent to $1\pmod 4$. Let $G=\langle x,\tau,z\rangle$. Then $G$ is an edge-regular group of automorphisms of $\Gamma$ of order $2^{n+2}$ and every involution of $G$ fixes a vertex.
\end{lemma}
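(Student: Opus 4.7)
My plan is to work in the convention that products compose left to right (so $gh$ is the permutation $v\mapsto h(g(v))$), which is the convention making $x=\alpha_1\sigma$ satisfy the dihedral relation $\tau x\tau=x^{-1}$. First I would note that $\sigma$, $\tau$, and each $\alpha_i$ are automorphisms of $\Gamma$, and record the commutation rules $\sigma^{-1}\alpha_j\sigma=\alpha_{j+2}$, $\tau\alpha_j\tau=\alpha_{4-j}$, and $\tau\sigma\tau=\sigma^{-1}$, all of which are immediate from the definitions. Induction using the first rule gives $x^{2^k}=\bigl(\prod_{\ell=0}^{2^k-1}\alpha_{1-2\ell}\bigr)\sigma^{2^k}$ for every $k\ge 0$, so $y:=x^{2^{n-1}}=\prod_{i\text{ odd}}\alpha_i$ and $x$ has order $2^n$. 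A direct computation then yields the further relations $\tau x\tau=x^{-1}$, $xzx^{-1}=yz$ and $\tau z\tau=yz$, from which one sees that $y$ commutes with each of $x,\tau,z$ and so lies in $Z(G)$. Passing to $G/\langle y\rangle$, the images $\bar x,\bar\tau,\bar z$ satisfy $\bar\tau\bar x\bar\tau=\bar x^{-1}$ with $\bar z$ central, so $G/\langle y\rangle$ is isomorphic to the direct product of a dihedral group of order $2^n$ and a cyclic group of order $2$, giving $|G|=2^{n+2}=|\E(\Gamma)|$.

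For edge-regularity, the projection $\rho\colon\Aut(\Gamma)\to\Aut(C_{2^n})$ (induced by forgetting the second coordinate) maps $G$ onto $\langle\sigma,\tau\rangle$, which is transitive on the $2^n$ cycle edges because $\tau$ swaps the two $\langle\sigma\rangle$-orbits. Since $|\rho(G)|=2^n$ and $|G|=2^{n+2}$, the kernel $K:=G\cap\ker\rho$ has order $4$, and since $\langle y,z\rangle\le K$ is already of order $4$, in fact $K=\langle y,z\rangle$. The restrictions of $1,y,z,yz$ to any pair of adjacent columns realise all four elements of $\ZZ_2\times\ZZ_2$, so $K$ is transitive on the four edges above any cycle edge. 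Hence $G$ is transitive on $\E(\Gamma)$, and since $|G|=|\E(\Gamma)|$ and $\Aut(\Gamma)$ acts faithfully on edges by Lemma~\ref{lemma:technical}, the action of $G$ on edges is regular.

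Finally, using the identity $zx^a=x^azy^a$ (a consequence of $xzx^{-1}=yz$) one computes $(x^a\tau)^2=1$ and $(x^az\tau)^2=y^{a+1}$, from which the involutions of $G$ are exactly $y,z,yz$, the $2^n$ elements $x^a\tau$ for $0\le a<2^n$, and the $2^{n-1}$ elements $x^az\tau$ with $a$ odd. The three elements of $\langle y,z\rangle\setminus\{1\}$ fix two entire columns each (the even columns for $y$, the columns $i\equiv 1\pmod 4$ for $z$, and the columns $i\equiv 3\pmod 4$ for $yz$). For $g=x^az^c\tau$ the projection $\rho(g)$ fixes exactly the two columns $i_1$ and $i_1+2^{n-1}$ with $i_1\equiv 2-a\pmod{2^{n-1}}$; when $a$ is even these columns are even and the $\alpha$-part of $x^a$, supported on odd indices only, acts trivially on their fibres. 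The main obstacle is the case $a$ odd, where both fixed columns are odd: introducing the swap-count $e_a(i):=|\{k:0\le k<a,\ 1-2k\equiv i\pmod{2^n}\}|\bmod 2$, a short calculation gives the parity identity $e_a(i_1)+e_a(i_1+2^{n-1})\equiv 1\pmod 2$, so exactly one of the two fixed columns has $e_a(i)=0$. Matching this against the fibre-fixing condition (which for $c=1$ requires $e_a(i)=1$ if $a\equiv 1\pmod 4$ and $e_a(i)=0$ if $a\equiv 3\pmod 4$) then shows that $g$ always fixes at least one fibre, hence at least two vertices. Every involution of $G$ therefore fixes a vertex.
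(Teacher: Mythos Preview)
Your argument is correct but organised differently from the paper's. For the group order and edge-regularity the paper shows $\langle x,\tau\rangle\cong D_{2^n}$, identifies the normal Klein four-subgroup $\langle z,z^\tau\rangle$ (which coincides with your $\langle y,z\rangle$), and establishes edge-transitivity by exhibiting transitive local actions $G_u^{\Gamma(u)}$ and $G_v^{\Gamma(v)}$ at one vertex in each $G$-orbit; your projection to the base cycle, with the kernel $\langle y,z\rangle$ acting transitively on the four edges over each cycle edge, is an equally clean alternative. The main divergence is in treating the involutions: the paper reduces modulo conjugacy to six representatives ($z$, $x^{2^{n-1}}$, $\tau$, $x\tau$, $zx\tau$, $z^\tau x\tau$) and simply names a fixed vertex for each, whereas you enumerate all involutions and rely on the parity identity $e_a(i_1)+e_a(i_1+2^{n-1})\equiv 1\pmod 2$. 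That identity is genuine---for odd $a=2s+1$ one is counting $k\in[0,2s+1)$ with $k\equiv s\pmod{2^{n-2}}$, and this count is $1$ when $s<2^{n-2}$ and $3$ when $2^{n-2}\le s<2^{n-1}$---but it is the crux of your approach and deserves to be spelled out rather than labelled ``a short calculation''. One further small point: deducing $|G|=2^{n+2}$ from $G/\langle y\rangle\cong D_{2^n}\times C_2$ requires knowing $\bar z\notin\langle\bar x,\bar\tau\rangle$, which you do not justify at that stage; it is cleaner to read off $|G|=|\rho(G)|\cdot|\langle y,z\rangle|=2^n\cdot 4$ directly, as your second paragraph in effect does.
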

\begin{proof}
Clearly $\langle\sigma,\tau\rangle\cong D_{2^{n-1}}$ and $|\tau|=|z|=2$. Note that $|\sigma|=2^{n-1}$ and $x^{2^{n-1}}=\alpha_1\alpha_3\alpha_5\ldots\alpha_{2^n-1}$ hence $|x|=2^n$. Moreover,
$$\tau x\tau x=\tau \alpha_1\sigma\tau \alpha_1\sigma=\tau \alpha_1\tau\sigma^{-1} \alpha_1\sigma=\alpha_1^\tau\alpha_1^{\sigma}=\alpha_3\alpha_3=1,$$
hence $\langle x,\tau\rangle \cong D_{2^n}$.

Note that $z^\tau=\alpha_0\alpha_1\alpha_2\ldots\alpha_{2^n-4}\alpha_{2^n-3}\alpha_{2^n-2}$, that is $z^\sigma$ is the product of all $\alpha_i$'s with $i$ not congruent to $3\pmod 4$. It follows that $\langle z,z^\tau\rangle\cong C_2^2$. Note that $z^\sigma=z^\tau$. It follows that $\langle z,z^\tau\rangle$ is normal in $G$. Note that $zz^\tau=x^{2^{n-1}}$ and hence $\langle x,\tau\rangle\cap\langle z,z^\tau\rangle=\langle x^{2^{n-1}}\rangle$ and hence $|G|=\frac{|\langle x,\tau\rangle||\langle z,z^\tau\rangle|}{|\langle x^{2^{n-1}}\rangle|}=2^{n+2}$.

We now determine representatives for the conjugacy classes of involutions of $G$. Since $\langle z,z^\tau\rangle$ is normal in $G$ and $\langle x,\tau\rangle \cong D_{2^{n}}$, it follows that every involution in $G$ is conjugate to an element of one of the following three cosets: $\langle z,z^\tau\rangle$, $\langle z,z^\tau\rangle \tau$ and $\langle z,z^\tau\rangle x\tau$. 

The involutions in $\langle z,z^\tau\rangle$ are $z$, $z^\tau$ and $x^{2^{n-1}}$, but $z$ is conjugate to $z^\tau$. The only involutions in $\langle z,z^\tau\rangle \tau$ are $\tau$ and  $x^{2^{n-1}}\tau$, but $\tau$ is conjugate to $x^{2^{n-1}}\tau$ under $x^{2^{n-2}}$. The four elements of  $\langle z,z^\tau\rangle x\tau$ are involutions, but $x\tau$ and $x^{2^{n-1}}x\tau$ are conjugate under $x^{2^{n-2}}$. We have shown that every involution in $G$ is conjugate to one of $z$, $x^{2^{n-1}}$, $\tau$, $x\tau$, $zx\tau$ or $z^\tau x\tau$. 

We show that each of them fixes a vertex of $\Gamma$. Clearly, $z$ fixes the vertex $(1,0)$ while $x^{2^{n-1}}$ and $\tau$ both fix the vertex $(2,0)$. It is not too hard to check that $x\tau$ and $z$ both fix $(2^{n-1}+1,0)$ hence so does $zx\tau$. Finally, $z^\tau x\tau$ fixes the vertex $(1,0)$.

We now show that $\Gamma$ is $G$-edge-transitive. There is a natural $G$-invariant partition of $\V(\Gamma)$ induced by the block $B=\{(1,0),(1,1)\}$. Clearly $\langle\sigma\rangle$ has two orbits on these blocks and $x$ maps $(1,0)$ to $(3,1)$. It follows that $G$ has at most two orbits on vertices. In fact, it is easy to see that $G$ has exactly two orbits on vertices. Let $u$ be the vertex $(2,0)$ and $v$ be the vertex $(2^{n-1}+1,0)$. Note that $u$ and $v$ are in different $G$-orbits. Moreover, $G_u$ contains $\tau$ and $x^{2^{n-1}}$, and hence $G_u^{\Gamma(u)}$ is transitive. Similarly, $G_v$ contains $x\tau$ and $z$ and hence $G_v^{\Gamma(v)}$ is transitive. Since $G$ has two orbits on vertices, it follows that $\Gamma$ is locally $G$-arc-transitive and hence $G$-edge-transitive by Lemma~\ref{lem:final}. Since $\Gamma$ has $2^{n+2}$ edges, $G$ is actually edge-regular.
\end{proof}

\begin{lemma}
Let $n\geq 2$, let $\Gamma=W(2,2^n)$ and label the vertices of $\Gamma$ by $\{1,1',2,2',\ldots, 2^n, (2^n)'\}$ such that $i$ and $i'$ both have  $\{i-1,(i-1)',i+1,(i+1)'\}$ as their neighbourhood (addition is computed modulo $2^n$). Let $t$, $\sigma$ and $\tau$ be permutations of $\V(\Gamma)$ according to the rules
\begin{align*}
t&=(1~1')(3~3')(5~5')\ldots (2^{n}-1~(2^{n}-1)'),\\
\sigma&=(1~2~3~\ldots~2^n)(1'~2'~3'~\ldots~(2^n)'),\\
\tau&\colon i\mapsto -i+3, \quad\tau\colon i'\mapsto (-i+3)',
\end{align*}
again, with addition computed modulo $2^n$. Clearly, $t, \sigma,\tau\in\Aut(\Gamma)$. 

Let $\Sigma=\mathbb{D}(\Gamma)$ and recall that $\V(\Sigma)=\E(\Gamma)\cup(\V(\Gamma)\times\ZZ_2)$ and $e\in\E(\Gamma)$ is adjacent to $(v,i)$ in $\Sigma$ if $v$ is incident with $e$ in $\Gamma$. For $v\in\V(\Gamma)$, let $\alpha_v$ be the automorphism of $\Sigma$ interchanging $(v,0)$ and $(v,1)$ and fixing all other vertices of $\Sigma$. Further, there is a natural embedding of $\Aut(\Gamma)$ in $\Aut(\Sigma)$ by letting $g\in\Aut(\Gamma)$ mapping a vertex $e$ in $\E(\Gamma)$ to $e^g$ and a vertex $(v,i)$ in $\V(\Gamma)\times\ZZ_2$ to $(v^g,i)$. Note that the group generated by $\Aut(\Gamma)$ and $\{\alpha_v\mid v\in\V(\Gamma)\}$ is isomorphic to the wreath product $\ZZ_2 \wr \Aut(\Gamma)$.

Let $\omega=\alpha_1\alpha_3\ldots\alpha_{2^n-1}$, let $x=\alpha_1\alpha_{1'}\sigma$ and let $z=\omega t$. Let $G=\langle x,\tau,z\rangle$. Then $G$ is an edge-transitive group of automorphisms of $\Sigma$ of order $2^{n+5}$ such that every involution fixes a vertex.
\end{lemma}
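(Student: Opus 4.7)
The plan mirrors the previous lemma, with a richer group requiring more delicate bookkeeping.  I would begin by writing $B = \langle \alpha_v : v \in \V(\Gamma)\rangle$ and letting $\pi$ denote the projection $\ZZ_2 \wr \Aut(\Gamma) \to \Aut(\Gamma)$; setting $H = \pi(G)$ and $K = G \cap B$ reduces the order computation to finding $|H|$ and $|K|$.  A direct calculation in $\Aut(\Gamma)$ shows that $\sigma t \sigma^{-1}$ (the automorphism of $\Gamma$ swapping $i \leftrightarrow i'$ for each even $i$) coincides with $\tau t \tau$, and that $\langle t, \sigma t \sigma^{-1}\rangle \cong \ZZ_2^2$ is normal in $H$ with complement $\langle \sigma, \tau\rangle \cong D_{2^n}$, so $|H| = 2^{n+3}$.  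Inside $K$ I would identify $x^{2^n} = \prod_v \alpha_v$ and $z^2 = \prod_{i \text{ odd}}\alpha_i\alpha_{i'}$; since $G$ acts on the abelian group $B$ only through $\pi$, the $G$-normal closure of $\{x^{2^n}, z^2\}$ is the two-dimensional $\mathbb{F}_2$-span of $z^2$ and $\sigma z^2 \sigma^{-1}$ (which already contains $x^{2^n} = z^2 + \sigma z^2 \sigma^{-1}$).  A canonical-form argument---writing an arbitrary word in $x, \tau, z$ as a base group element times a coset representative---then shows $K$ equals this span, so $|K| = 4$ and $|G| = 2^{n+5}$.

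The involutions are classified by their image under $\pi$.  Every element of $K$ is an involution fixing all of $\E(\Gamma) \subseteq \V(\Sigma)$.  For a nontrivial involution $h \in H$ with chosen lift $g_0 \in G$, an element $g_0 k$ with $k \in K$ is an involution if and only if $k + k^h = g_0^2$ in the abelian group $B$; this equation either has no solution or a full coset of solutions.  I would enumerate $H$-conjugacy classes of involutions of $H$ (coming from $\langle t, \sigma t\sigma^{-1}\rangle$, from $\langle \sigma, \tau\rangle$, and from the mixed cosets), check the cocycle condition case by case, and for each surviving class exhibit a fixed vertex of $\Sigma$.  If $h$ fixes an edge $e$ of $\Gamma$, every involution lift fixes $e$; if $h$ fixes only vertices of $\Gamma$, the freedom to adjust by $K$---especially by the global coordinate flip $x^{2^n}$---cancels the base group at some fixed vertex $v$, producing a fixed $(v,i) \in \V(\Sigma)$.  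The delicate case $h = \sigma^{2^{n-1}}$, which fixes no vertex or edge of $\Gamma$, turns out to be vacuous: $g_0^2 = x^{2^n}$ is the all-ones vector while $k^h = k$ for every $k \in K$ (since $\sigma^{2^{n-1}}$ preserves parity of indices when $n \ge 2$), so the cocycle equation forces $0 = x^{2^n}$ and no lift is an involution.

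Edge-transitivity of $G$ on $\Sigma$ reduces, via the bipartition $\V(\Sigma) = \E(\Gamma) \sqcup (\V(\Gamma) \times \ZZ_2)$, to verifying $H$-arc-transitivity on $\Gamma$ and $G$-transitivity on $\V(\Gamma) \times \ZZ_2$.  The first holds because the block system $\{\{v,v'\}\}$ has quotient $C_{2^n}$ on which $\langle\sigma,\tau\rangle$ is arc-transitive, the kernel $\langle t, \sigma t\sigma^{-1}\rangle$ acts transitively within each block, and a direct check shows the stabiliser in $H$ of a vertex acts on its four neighbours as the full Klein $4$-group.  The second follows from $H$-vertex-transitivity combined with the fact that $z$ flips the $\ZZ_2$-coordinate at odd unprimed vertices.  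Lemma~\ref{lem:final}, applied after confirming local arc-transitivity of $G$ on $\Sigma$, then finishes the proof.  The main obstacle is the involution bookkeeping: the interplay between the cocycle obstruction in $B$ and the adjustment afforded by $K$ requires careful class-by-class analysis to ensure that every involution which actually appears in $G$ admits a fixed vertex in $\Sigma$.
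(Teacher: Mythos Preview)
Your extension-theoretic organisation via the projection $\pi\colon G\to H=\langle\sigma,\tau,t\rangle\le\Aut(\Gamma)$ with kernel $K=G\cap B$ is a legitimate alternative to the paper's approach.  The paper instead exhibits the normal subgroup $N=\langle z,z^\tau\rangle\cong C_4\times C_4$ (note $|z|=4$, not $2$) and writes $G=\langle x,\tau\rangle\cdot N$ with $\langle x,\tau\rangle\cong D_{2^{n+1}}$ and $\langle x,\tau\rangle\cap N=\langle x^{2^n}\rangle$, giving $|G|=2^{n+2}\cdot 16/2=2^{n+5}$ directly.  This bypasses your ``canonical-form argument'' for $|K|=4$, which as stated is only a sketch: to make it rigorous you would in effect need to show $G\le \langle x,\tau\rangle\cdot N$, which is exactly the paper's computation that $z^x=z^\tau$ and $(z^\tau)^x=z$.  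For the involution enumeration the paper then only has to inspect the (few) $N$-cosets whose image in $G/N\cong D_{2^n}$ has order at most~$2$, whereas your scheme ranges over the larger group $H$ of order $2^{n+3}$.

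There is, however, a genuine gap in your involution analysis.  You single out $h=\sigma^{2^{n-1}}$ as ``the delicate case'' because it fixes no vertex or edge of $\Gamma$, and correctly show it has no involution lift.  But $h=tt^\tau\sigma^{2^{n-1}}\in H$ is another involution fixing no vertex or edge of $\Gamma$ (it sends $i\mapsto (i+2^{n-1})'$), and this one \emph{does} lift to involutions of $G$: since $z^x=z^\tau$ and $(z^\tau)^x=z$, the element $zz^\tau$ is centralised by $x$, and one computes $(zz^\tau x^{2^{n-1}})^2=(zz^\tau)^2x^{2^n}=z^2(z^\tau)^2\cdot x^{2^n}=(x^{2^n})^2=1$.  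So $zz^\tau x^{2^{n-1}}$ is an involution whose $\pi$-image fixes nothing in $\Gamma$, and your dichotomy ``either $h$ fixes an edge, or adjust by $K$ at a fixed vertex of $h$'' does not apply.  You must treat this $H$-class (and its companions $t\sigma^{2^{n-1}}$, $t^\tau\sigma^{2^{n-1}}$) separately and either show its lifts are $G$-conjugate to involutions already handled, or exhibit fixed vertices in $\Sigma$ for them directly.
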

\begin{proof}

Note that $|\tau|=2$, $|\sigma|=2^n$ and $x^{2^n}=\alpha_1\alpha_{1'}\alpha_2\alpha_{2'}\ldots\alpha_{2^n}\alpha_{(2^n)'}$ hence $|x|=2^{n+1}$. Note that $\sigma\tau$ is an involution fixing both $1$ and $1'$ hence $\sigma\tau$ commutes with both $\alpha_1$ and $\alpha_{1'}$ and $(x\tau)^2=1$. It follows that $\langle x,\tau\rangle \cong D_{2^{n+1}}$. Observe that $z^2=\alpha_1\alpha_{1'}\alpha_3\alpha_{3'}\ldots\alpha_{2^n-1}\alpha_{(2^n-1)'}$ hence $|z|=4$. Note also that $(z^\tau)^2=\alpha_2\alpha_{2'}\alpha_4\alpha_{4'}\ldots\alpha_{2^n}\alpha_{(2^n)'}$ hence $x^{2^n}=z^2(z^\tau)^2$.

Note that $t$ commutes with both $t^\tau$ and $\omega^\tau$ Since $\tau$ is an involution, it follows that $t^\tau$ commutes with $\omega$. Thus
$$zz^\tau=\omega t\omega^\tau t^\tau=\omega \omega^\tau t^\tau t=\omega^\tau t^\tau \omega t=z^\tau z.$$

This implies that $\langle z,z^\tau\rangle \cong C_4^2$.  Observe that $\alpha_1\alpha_{1'}$ commutes with both $t$ and $t^\tau$ and hence $\alpha_1\alpha_{1'}$ commutes with both $z$ and $z^\tau$.

In particular,  $z^x=z^\sigma=\omega^\sigma t^\sigma=\omega^\tau t^\tau=z^\tau$ and ${z^\tau}^x={z^\tau}^\sigma=z^{\sigma\tau^{-1}}=z^{\tau\tau^{-1}}=z$. It follows that $\langle z,z^\tau\rangle$ is normal in $G$. Clearly $\langle x,\tau\rangle\cap\langle z,z^\tau\rangle=\langle x^{2^{n}}\rangle$ and hence $|G|=\frac{|\langle x,\tau\rangle||\langle z,z^\tau\rangle|}{|\langle x^{2^{n}}\rangle|}=2^{n+5}$.

We now determine representatives for the conjugacy classes of involutions of $G$. Since $\langle z,z^\tau\rangle$ is normal in $G$ and $\langle x,\tau\rangle \cong D_{2^{n+1}}$, it follows that every involution in $G$ is conjugate to an element of one of the following three cosets: $\langle z,z^\tau\rangle$, $\langle z,z^\tau\rangle \tau$ and $\langle z,z^\tau\rangle x\tau$.

Since $\langle z,z^\tau\rangle\cong C_4^2$, the only involutions in $\langle z,z^\tau\rangle$ are $z^2$, $({z^\tau})^2$ and $x^{2^n}$, but $z^2$ is conjugate to $({z^\tau})^2$ under $\tau$. The only involutions in $\langle z,z^\tau\rangle \tau$ are $\tau$, $z{(z^\tau)}^3\tau$, $z^3z^\tau\tau$ and   $x^{2^n}\tau$, but $\tau$ is conjugate to $x^{2^n}\tau$ under $x^{2^{n-1}}$, to $z{(z^\tau)}^3\tau$ under $z^\tau$ and to $z^3z^\tau\tau$ under $z$. Similarly, the only involutions in $\langle z,z^\tau\rangle x\tau$ are $x\tau$, $z^2x\tau$, ${(z^\tau)}^2x\tau$ and $x^{2^n}x\tau$, but $x\tau$ and $x^{2^n}x\tau$ are conjugate under $x^{2^{n-1}}$. We have shown that every involution in $G$ is conjugate to one of $z^2$, $x^{2^n}$, $\tau$, $x\tau$, $z^2x\tau$, or ${(z^\tau)}^2x\tau$. 

We show that each of them fixes a vertex of $\Sigma$. By definition, $z^2$ and $x^{2^n}$ fix all the vertices of $\Sigma$ corresponding to $\E(\Gamma)$. Similarly, $\tau$ fixes the edge $\{1,2\}$ of $\Gamma$ and hence fixes the corresponding vertex of $\Sigma$. It is not hard to check that $x\tau$ fixes the vertex $((2^{n-1}+1),0))$ of $\Sigma$. Since ${(z^\tau)}^2$ also fixes $((2^{n-1}+1),0)$, so does ${(z^\tau)}^2x\tau$. Finally, $z^2x\tau$ fixes $(1,0)$.

We now show that $G$ is transitive on edges of $\Sigma$. Clearly, $\langle \sigma,t\rangle$ acts transitively on $\V(\Gamma)$. Moreover, the group $\langle t^\tau,\sigma\tau\rangle$ fixes the vertex $1$ and acts transitively on the set $\{2,2',2^{n},(2^{n})'\}$ of neighbours of $1$ in $\Gamma$. Hence $\Gamma$ is $\langle \sigma,t,\tau\rangle$-arc-transitive. This implies that $G$ is transitive on $\E(\Gamma)$. Let $e=\{1,2\}$ viewed as a vertex of $\Sigma$. The group $\langle \tau,x^{2^n}\rangle$ fixes $e$ and acts transitively on the set $\{(1,0),(1,1),(2,0),(2,1)\}$ of neighbours of $e$ in $\Sigma$. Since every edge of $\Sigma$ has one endpoint in $\E(\Gamma)$, $G$ is transitive on $\E(\Sigma)$.
\end{proof}

\bibliographystyle{amsplain}

\begin{thebibliography}{11}

\bibitem{alspach} B.\ Alspach, Lifting Hamilton cycles of quotient graphs, \emph{Discrete Math.} \textbf{78} (1989), 25--36.

\bibitem{biggs}
N.\ Biggs,  `Three remarkable graphs', \emph{Can. J. Math.} \textbf{25} (1973),
  397--411.

\bibitem{BP} M.\ Boben, T.\ Pisanski, Polycyclic configurations, \textit{European J.\ Combin.} {\bf 24} (2003), 431--457.

\bibitem{Dobson}  E.~Dobson, A.~Malni\v{c}, D.~Maru\v{s}i\v{c}, L.~A.~Nowitz, Semiregular automorphisms of vertex-transitive graphs of certain valencies, \textit{J.\ Combin.\ Theory, Ser.\ B} {\bf 97} (2007), 371--380.

\bibitem{GV} M.\ Giudici, G.\ Verret, Semiregular elements in arc-transitive graphs of valency $2p$, in preparation.

\bibitem{GX} M.\ Giudici, J.\ Xu, All vertex-transitive locally-quasiprimitive graphs have a semiregular automorphism, \textit{J.\ Alg.\ Comb.}  {\bf 25} (2007), 217--232.

\bibitem{KS} K.\ Kutnar, P.\ \v{S}parl, Distance-transitive graphs admit semiregular automorphisms, \textit{European J.\ Combin.} {\bf 31} (2010), 25--28.

\bibitem{MR} B.\ McKay, G.\ F.\ Royle, The transitive graphs with at most 26 vertices.
\textit{Ars Combin.} \textbf{30} (1990), 161--176. 


\bibitem{Dragan} D.~Maru\v{s}i\v{c}, On vertex symmetric digraphs, \textit{Discrete Math.} {\bf 36} (1981), 69--81.

\bibitem{DraganScap} D.~Maru\v{s}i\v{c}, R.~Scapellato, Permutation groups, vertex-transitive digraphs and semiregular automorphisms, \textit{European J. Combin.} {\bf 19} (1998), 707--712.

\bibitem{MarPis} D.\ Maru\v{s}i\v{c}, T.\ Pisanski, Weakly flag-transitive configurations and half-arc-transitive graphs, \textit{European J.\ Combin.} {\bf 20} (1999), 559--570.





\bibitem{conj} T.\ Pisanski, From graphs to configurations and back, {\em Proceedings of the conference ``The Mathematics of Klee and Gr\"unbaum'', 28th -- 30th July 2010,  University of Washington Seattle},
\url{https://sites.google.com/a/alaska.edu/kleegrunbaum/home/abstracts#Pisanski} [accessed 27th March 2013].

\bibitem{PW} P.~Poto\v{c}nik, S.~Wilson, Tetravalent edge-transitive graphs of girth at most 4, \textit{J. Combin. Theory, Ser. B} {\bf 97} (2007), 217--236.

\bibitem{leash} P.\ Poto\v{c}nik, S.\ Wilson, On the point-stabiliser in a transitive permutation group,  \textit{Monatshefte f\"ur Mathematik}  {\bf 166} (2012), 497--504.

\bibitem{P85} C.\ E.\ Praeger, Imprimitive symmetric graphs. \emph{Ars Combin.} \textbf{19A} (1985), 149--163.


\bibitem{Verret} G.~Verret, Arc-transitive graphs of valency $8$ have a semiregular automorphism, arXiv:1303.4501v1 [math.CO].

\bibitem{wielandt} H.~Wielandt, \textit{Finite permutation groups}, Translated from German by R. Bercov, Academic Press, New York (1964).

\bibitem{X} J.\ Xu, Semiregular automorphisms of arc-transitive graphs with valency $pq$, \textit{European J.\ Combin.} {\bf 29} (2008),  622--629.


\end{thebibliography}

\end{document}